\newcommand{\aut}{\mathfrak{aut}}
\newcommand{\hol}{\mathfrak{hol}}
\newcommand{\C}{\mathbb{C}}
\newcommand{\R}{\mathbb{R}}
\newcommand{\N}{\mathbb{N}}
\newcommand{\dop}[1]{\frac{\partial}{\partial #1}}
\newcommand{\todo}[1]{}
\DeclareMathOperator{\ord}{ord}
\DeclareMathOperator{\imag}{Im}
\DeclareMathOperator{\real}{Re}
\newtheorem{thm}{Theorem}
\newtheorem{theorem}{Theorem}
\newtheorem{lemma}[thm]{Lemma}
\newtheorem{proposition}[thm]{Proposition}
\theoremstyle{definition}
\newtheorem{rem}{Remark}
\newtheorem{remark}[rem]{Remark}
\renewcommand{\Re}{\mathop{\rm Re}\nolimits}
\renewcommand{\Im}{\mathop{\rm Im}\nolimits}
\newcommand{\im}{\ensuremath{\mbox{\rm Im}\,}}
\newcommand{\re}{\ensuremath{\mbox{\rm Re}\,}}
\def\beq{\begin{equation}}
\def\eeq{\end{equation}}
\newcommand{\CC}[1]{\mathbb{C}^{#1}}
\newcommand{\RR}[1]{\mathbb{R}^{#1}}
\newcommand{\dz}{\frac{\partial}{\partial z}}
\newcommand{\dw}{\frac{\partial}{\partial w}}
\newcommand{\dd}{\partial}
\numberwithin{equation}{section}
\subjclass[2010]{32V40,32H02}
\title{Holomorphic vector fields with real integral manifolds}
\author{Martin Kol\'a\v{r}}
\address{Department of Mathematics and Statistics, Masaryk University, Brno, Czech Republic}
\email{mkolar@math.muni.cz}
\author{Ilya Kossovskiy}
\address{Department of Mathematics, Sustech University, Shenzhen, China $\&$ Department of Mathematics and Statistics, Masaryk University, Brno, Czech Republic $\&$ TU Wien, Vienna, Austria}
\email{ilyakos@sustech.edu.cn}
\author{Bernhard Lamel}
\address{Faculty of Mathematics, University of Vienna, Austria}
\email{bernhard.lamel@univie.ac.at}
\begin{document}

\begin{abstract}
  We classify singular holomorphic vector fields in $\CC{2}$ admitting
  a (Levi-nonflat) real-analytic invariant $3$-fold through the
  singularity. In this way, we complete the classification of
  infinitesimal symmetries of real-analytic Levi-nonflat hypersurfaces
  in complex two-space. The classification of holomorphic vector fields obtained in the paper has very interesting overlaps with the recent Lombardi-Stolovitch classification theory for holomorphic vector fields at a singularity. In particular, we show that most of the resonances arising in Lombardi-Stolovitch theory do {\em not} occur under the presence of (Levi-nonflat) integral manifolds.   
\end{abstract}

\maketitle

\section{Introduction}

In this paper, we study a new intriguing phenomenon at the
intersection of Complex Dynamics and CR geometry by considering real
invariant objects of complex vector fields.  From the CR geometry
point of view, we complete the classification of possible
infinitesimal symmetries of germs of real hypersurfaces in
$\mathbb C^2$, by considering the remaining class of nonminimal
(infinite type) manifolds. In particular, our results imply the
classification of pairs $(M,Y)$, where $M$ is a real-analytic
Levi-nonflat hypersurface and $Y$ its infinitesimal symmetry, i.e. a
holomorphic vector field whose real part is tangent to $M$.

From the Complex Dynamics perspective, we classify singular
holomorphic vector fields $X$, which admit a Levi-nonflat
$3$-dimensional {\em real (analytic) integral manifold} through the
singularity. By a real integral manifold we mean a real submanifold
$M \subset \CC{N}$ which is invariant under the {\em real} flow of
$X$, or equivalently that $\re X$ is tangent to $M$.  It turns out
that the existence of such a manifold has very strong consequences for
the presence of resonances in the vector field and the possibility to
build an {\em analytic} transformation bringing the vector field to
its Lombardi-Stolovitch normal form \cite{LS} (which is a recent natural development of the Poincar\'e-Dulac normal form to the case of higher order degeneracies of a vector field).  This exhibits new and
very interesting phenomena. In particular, we show that holomorphic
vector field in $\CC{2}$ with a real integral manifold admit only
finitely many resonant terms in their normal form (actually, at most
one such term besides one exceptional type).

On the CR geometry side, it has been known since the work of Poincar\'e
\cite{poincare} that symmetries of real hypersurfraces in complex
space are rare, i.e. they are uniquely determined by their finite jet
at a point, under mild nondegeneracy assumptions. Turning to the
$\CC{2}$ case, the classification of possible symmetries of
real-analytic hypersurfaces in the {\em Levi-nondegenerate} case
follows from the Cartan's theory \cite{cartan}, later generalized to
Levi-nondegenerate hypersurfaces of arbitrary dimensions by Tanaka and
Chern and Moser \cite{tanaka,chern}. A detailed description was
obtained in the work \cite{belold} of Beloshapka (see also
Kruzhilin-Loboda \cite{kruzh} for the higher dimemsional case). It
follows from this work that either every infinitesimal symmetry is
linearizable, or the hypersurface is {\em spherical} (and thus the
vector field is biholomorphic to a quadratic vector field from the
projective algebra $\mathfrak{pgl}(2,\CC{})$). A normal form in this
case is a linear combination of the vector fields {\bf 1} to {\bf 4}
below. In case a hypersurface is Levi-degenerate at a reference point
$p$ but is {\em of finite type} (i.e., there is no complex disc
through $p$ lying in $M$), the classification of symmetries was
completed in the work \cite{kolar} of the first author. The normal
form of $X$ is then given by a linear combination of the vector
fields {\bf 4} to {\bf 6} below. For
developments in higher dimension, see the work of the first author
with Meylan and Zaitsev \cite{kmz} and subsequent publications.

In the opposite extreme case, when the hypersurface is {\em
  Levi-flat}, the Lie algebra of infinitesimal CR automorphisms of $M$
is infinite dimensional, since the manifold is locally biholomorphic
to a flat plane. In fact, Levi-flat hypersurfaces exhibit fascinating
{\em global} dynamics (see, e.g., Ilyashenko-Yakovenko
\cite{ilyashenko} and references therein, and also Lins-Neto
\cite{LN}, Brunella \cite{brunella}, and subsequent publications), but
nothing particular can be said in terms of local
behaviour. Interesting phenomena though occur if one allows {\em
  singularities} of hypersurfaces (see e.g. Burns-Gong \cite{BG}), but
these considerations are outside of the scope of the present paper.

The intermediate case of {\em infinite type} analytic hypersurfaces
(i.e. those which admit a holomorphic disc through $p$ but at the same time
are Levi-nonflat) turns out to be much more interesting and difficult at
the same time. The (locally unique)
holomorphic disc through $p$ surves as a {\em separatrix}
for a vector field $X$ for which $M$ is a real integral
manifold. Infinite type hypersurfaces and their symmetries have been
studied intensively in recent years: see e.g. Kowalski
\cite{kowalski}, Zaitsev \cite{zaitsev}, Ebenfelt, Zaitsev, and the
third author \cite{ELZ}, first and third author \cite{KL}, Shafikov
and the second author \cite{nonminimal,divergence,nonminimalODE},
second and third author \cite{nonanalytic}, Ebenfelt with the second
and third authors \cite{EKL}. The difficulties here are illuminated by
the fact that, in contract with the finite type case, an infinitesimal
authomorphism is {\em not} anymore determined uniquely by any fixed
finite jet (see \cite{kowalski,zaitsev}), i.e., for any
$l\in\mathbb N$ there is a pair $(M,X)$ with $\re X$ tangent to $M$,
$j^l_pM=0$, but $X\not\equiv 0$. The respective examples are
discovered as certain {\em sphere blow-ups}.

We point out that invariant submanifolds of the lower dimensions $1$ and $2$ also naturally occur for holomorphic vector fields in $\CC{2}$. The classification of $1$-dimensional analytic integral manifolds is trivial though (all of them can be locally flattened), in the same way as $2$-dimensional ones at their {\em totally real} points. At {\em CR singular} points, however, the classification is highly nontrivial (e.g. Moser-Webster \cite{mw}, Gong \cite{gong}, Huang-Yin \cite{hy}). Describing holomorphic vector fields with CR singular integral manifolds is a very interesting problem which is, however, outside of the scope of this paper.

Let us now describe in more detail the Complex Dynamics point of
view. Consider a holomorphic vector field of the form
$$ X =   \sum_{j=1}^n f_j \dd_{z_j}, $$
satisfying $X(0) = 0$.  The main aim is to understand the local
dynamics of such a vector field in a neighbourhood of the singularity
at the origin.  Poincar\' e, inspired by problems of celestial
mechanics, initiated the classical classification theory at a
singularity for such vector fields in the late 19-th century (which we
now refer to as the {\em Poincar\'e-Dulac theory}).  He realized that on
one hand, in general it is impossible to find the phase portrait,
i.e. solve the corresponding ODE system $\dot Z = X(Z)$. On the other
hand, it may be possible to find a much simpler {\em normal form} for
the system, which can be integrated and used to investigate local
dynamics.
In the situation of a nondegenerate linearization of the system, we
wish to transform the system into a linear one.  However, as it turns
out, there can be obstructions for doing so. In fact, the obstructions
are of two kinds.  First, there can be {\em resonances} which prevent
the field to be linearized. Second, one may encounter {\em small
  divisors}, which prevents convergence of a formal power series
normalization. In the situation of a degenerate linearization, further
obstructions occur (leading to the concept of {\em Stokes data} and
subsequently to various {\em Stokes phenpomena}). For more on this, we
refer to the excellent book of Ilyashenko-Yakovenko \cite{ilyashenko} on the subject,
and the recent papers \cite{LS} of Lombardi-Stolovitch and Stolovitch \cite{bigdenom} containing
important developments of the Poincar\'e-Dulac theory for the situation
of vector fields with a degenerate linearization.

An important tool for understanding the local dynamics of a vector
field are {\em invariant objects}.  Let us illustrate this with some
examples:
\begin{compactenum}
\item By a celebrated result of Camacho-Sad \cite{camacho}, in complex dimension $n=2$, there
  always exists a separatrix, i.e.  an integral complex analytic set
  passing through zero. As long as it is smooth, we obtain an
  invariant manifold.

\item Another example of an integral manifold is a real Levi-flat
  hypersurface, which can arise, e.g.  from the separatrix and
  integral curves passing through neighbouring regular points.

\item The existence of invariant Levi flat sets can already have
  rather strong consequences (for an example, see e.g. Burns-Gong
  \cite{BG}).

\item The existence of an invariant manifold may lead to the
  convergence of a normal form for a Hamiltonian system, see e.g. Gong
  \cite{gong}.

\end{compactenum}

The main discovery of the present paper is that the existence of a
Levi-nonflat analytic invariant manifold for a holomorphic vector
field $X$ in $\CC{2}$ through a point $p$ implies the existence of an
unexpectedly simple normal form for $X$ at $p$. In particular,
compared to the general Poincar\'e-Dulac/Lombardi-Stolovitch theory,
very few resonances show up (in fact, at most one resonant term in the
normal form is possible, besides one exceptional case), and (with the
same exception) the normalization appears to be necessarily {\em
  convergent}, in contrast with the general theory. A detailed list of
normal forms is given below.

\bigskip

\begin{center}{ \bf A. Finite type case}
\end{center}
\begin{equation*}
  \begin{aligned}
    &{\bf 1.\,\,}  z \dd_z + 2 w \dd_w
    \\
    &{\bf 2.\,\,}  (a w + 2i \bar a z^2)\dd_z + 2i \bar a zw \dd_w,\quad a\in\CC{*}
    \\
    &{\bf 3.\,\,}  zw\dd_z + w^2\dd_w
    \\
    &{\bf 4.\,\,}  i z \dd_z
    \\
    &{\bf 5.\,\,}  z\dd_z +  l w\dd_w, \ \ \ \ \ l\in\mathbb{Z}_{\geq 3}
    \\
    &{\bf 6.\,\,}  \frac1k zw\dd_z + w^2\dd_w, \ \ \ \ k\in\mathbb{Z}_{\geq 2}
  \end{aligned}
\end{equation*}

\bigskip 

\begin{center}{\bf B.  Infinite type case}
\end{center}
\begin{equation*}
  \begin{aligned}
    {\bf 7.\,\,} & w^k \dd_z,\quad k\in\mathbb{N} 
      \hspace{.5cm} \\
    {\bf 8.\,\,} & (w^k + r w^{2k-1}) \dd_w, \ \ \  k\in\mathbb{Z}_{\geq 2},\,r\in\RR{}
      \hspace{.5cm} \\
    {\bf 9.\,\,} & w \dd_w \ \ \ 
      \hspace{.5cm} \\
    {\bf 10.\,\,} &  w^k(\mu z \dd_z + w \dd_w), \quad k\in\mathbb{Z}_{\geq 0}, \  \Re \mu  \in \mathbb Q^-, \  \Im \mu  \neq 0
      \hspace{.5cm} \\
    {\bf 11.\,\,} &  w^k(\mu  z \dd_z + w \dd_w) + \eta w^{2k +1},  \quad k\in\mathbb{N},\, \ 
      \mu  \in \mathbb Q^-, \ \eta \in \mathbb R
      \hspace{.5cm} \\
    {\bf 12.\,\,} & \mu  z \dd_z + w \dd_w, \quad \mu  \in \mathbb Q^-
      \hspace{.5cm} \\
    {\bf 13.\,\,} &  i z \dd_z
    \\
    {\bf 14.\,\,} &  i z w^{k}(1+ c_1w+...+ c_{q}w^{q})  \dd_z 
      +  (r w^{k+1+q}+tw^{2q+2k+1}) \dd_w,  \,\,\mbox{where}
    \\& k\in\mathbb{Z}_{\geq 0}, \, q\in\mathbb{N}, \, r \in \mathbb R^*, \, t, c_j \in \mathbb R 
    \hspace{.5cm} \\
  \end{aligned}
\end{equation*}

Now our main result is as follows.

 \begin{theorem}\label{main}
   Let $X$ be a nonzero holomorphic vector field which is singular
   at a point $p\in \C^2$, i.e. $X(p)=0$, and admits a germ of a
   Levi-nonflat real-analytic integral $3$-fold $M$ through $p$. Then
   the following dichotomy holds:

   \smallskip

   \noindent (i) if the integral manifold $M$ is of finite type at
   $p$, then $X$ is biholomorphic at $p$ to a (real) linear
   combination of the normal forms {\bf 1 -- 6}.

   \smallskip

   \noindent (ii) if the integral manifold $M$ is of infinite type at
   $p$, then (after a rescaling by $c\in\RR{*}$) the vector field $X$
   is either biholomorphic at $p$ to one of the normal forms {\bf 7 --
     13}, or is formally biholomorphic to the normal form {\bf 14}.

   \smallskip

   Furtheremore, each of the normal forms {\bf 1 -- 14} is realized by
   an actual (analytic) pair $(X,M)$.

 \end{theorem}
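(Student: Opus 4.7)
The plan is to split the analysis according to whether $M$ is of finite or infinite type at $p$, and in each case to reduce the tangency condition $\re X|_M \subset TM$ to a system of algebraic constraints on the Taylor coefficients of $X$ after first putting $M$ into a preferred normal form. In the finite type case, the strategy is to invoke existing classifications of infinitesimal symmetries. If $M$ is Levi-nondegenerate, Chern--Moser normal form together with Beloshapka's description \cite{belold} gives $X$ as a real linear combination of the projective vector fields corresponding to the sphere model, which are exactly the forms \textbf{1--4}. If $M$ is Levi-degenerate but of finite type, the classification of the first author \cite{kolar} yields forms \textbf{4--6}. Concatenating the two lists produces~(i).

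The infinite type case is where the main work lies. I would first place $M$ into a Kol\'a\v{r}-type normal form for Levi-nonflat infinite type hypersurfaces: choose coordinates so that the unique holomorphic disc through $p$ is $\Sigma=\{w=0\}$ and $M=\{r=0\}$ has a weighted polynomial model governed by invariants $k\in\mathbb{N}_0$ and a pole order $q$. Since $\Sigma$ is a separatrix for $X$ (invariance under the real flow of $\re X$ forces, by analytic continuation, invariance under the complex flow of $X$), writing $X=f\,\dd_z+g\,\dd_w$ forces $w\mid g$. Expanding $f$ and $g$ in the weighted filtration coming from the model of $M$ and substituting into the tangency equation $\re(Xr)|_M=0$, one collects terms weight by weight. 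The lowest weight piece pins down the leading behaviour of $X$, producing the factor $w^k$ and the admissible scalar coefficients $\mu$ in forms \textbf{7--14}; the higher weight pieces are cohomological equations of the kind familiar from Poincar\'e--Dulac and Lombardi--Stolovitch theory.

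The decisive observation I would exploit is that the existence of a real integral manifold imposes a \emph{reality} constraint on these cohomological equations which in almost all cases forces would-be resonant terms to vanish. Concretely, splitting the tangency equation into its holomorphic, antiholomorphic, and mixed parts, the mixed part lets me solve for the normalising coordinate changes term by term, while the remaining obstructions must sit in the real harmonic part and are confined to finitely many specific weights dictated by the model of $M$. Tracking this case by case -- distinguishing $\im\mu\neq 0$ from $\mu\in\mathbb{Q}^-$, and isolating the purely imaginary leading eigenvalue responsible for the $iz\dd_z$-type fields -- should produce exactly the list \textbf{7--14}.

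Convergence of the normaliser in cases \textbf{7--13} is expected because at most one resonant monomial survives at each weight, so the inductive construction admits a direct majorant bound without small denominators. Case \textbf{14} is the exceptional one: there the interaction of the purely imaginary leading eigenvalue with the higher order singularity produces resonances that cannot all be removed, and one must settle for the formal normal form stated. The realizability of each normal form in the last sentence is then routine once the classification is in hand: for each form one exhibits an analytic $M$ invariant under the corresponding $X$ directly, using the sphere in the finite type case and the explicit infinite type models of \cite{ELZ,KL,nonminimal} in the remaining cases. The hard part of the argument will be the infinite type analysis and, in particular, extracting from the reality constraint the strong nonresonance conclusion that collapses the potentially infinite Lombardi--Stolovitch family of resonant monomials to at most one.
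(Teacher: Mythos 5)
Your outline reproduces the paper's broad strategy (reduction of the finite type case to \cite{belold} and \cite{kolar}, the separatrix observation forcing $w\mid g$, and a weight-by-weight analysis of the tangency equation), but the two steps you yourself flag as ``the hard part'' are precisely where the proposal has no actual argument, and one claim is wrong as stated. The mechanism that collapses the resonances is not the holomorphic/antiholomorphic/mixed splitting you describe, nor is it governed by ``weights dictated by the model of $M$.'' In the paper, the reality of the tangency identity in normal coordinates first forces $\beta_k\equiv B\in\RR{}$ and $\ord_0\alpha_k=1$, so that the ratio $\lambda=B/A$ (hence $\mu=1/\lambda$) is well defined; one then introduces a \emph{second} weight system $[z]=\mu$, $[w]=1$ determined by the vector field itself, in which \emph{every} Poincar\'e--Dulac resonant monomial $z^{n_1(\ell)}w^\ell$, $z^{n_2(\ell)}w^\ell$ becomes homogeneous of one of only two weights, yielding the decomposition $X=Z_k+Z_{2k}$ of \eqref{k2k}; finally the weighted tangency equation (\autoref{lem:startingterm} and \autoref{l:deletemost}) eliminates all but the single coefficient $r$. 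Without identifying this grading, your assertion that the obstructions ``are confined to finitely many specific weights'' is an unsupported hope: for $\mu\in\mathbb Q^-$ the Poincar\'e--Dulac resonance set is infinite, and nothing in a reality splitting alone explains why only one term survives.

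The convergence claim has a genuine gap as well. You argue that convergence in cases \textbf{7--13} follows ``because at most one resonant monomial survives at each weight, so the inductive construction admits a direct majorant bound without small denominators.'' This conflates resonances of the normal form with resonances of the normalizing map. For $\mu=-p/q\in\mathbb Q^-$ the conjugacy equations \eqref{homol} have vanishing left-hand coefficients on the infinite family of indices with $-p\alpha+q\beta+p=0$ or $-p\alpha+q\beta-kq=0$, and no majorant scheme can be run across those equations. The paper must first prove uniqueness of the normal form by computing the formal centralizer of $X_N$ (\autoref{symmetries}), which shows that the normalizing transformation can be chosen with all resonant coefficients equal to zero; only then does the majorant argument on the non-resonant subsystem close. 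That centralizer step is absent from your proposal and is not a routine addition; the same circle of ideas (comparing centralizers) is also what the paper uses to establish the divergence in case \textbf{14}, though for the statement of the theorem itself your weaker ``formal only'' conclusion there suffices.
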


\begin{remark}\label{divergent}
  As shown in Section 4 below, the normal form {\bf 14} above is
  actually {\em divergent,} as a rule (in the sense that {\em any}
  normalizing transformation into it is divergent). Such a divergence phenomenon, in a sense, is to be expected due to the divergence results for mappings of infinite type hypersurfaces obtained recently in \cite{divergence,nonanalytic}.  
\end{remark}

The method that we use for the normalization is a far-going
development of the earlier work \cite{KL} of the first and the third
authors (where a complete classification of symmetries was given for
so-called {\em ruled hypersurfaces}), and is based on studying (and
comparing) various weight systems associated with local geometric
invariants of the integral manifold of a vector field. It turns out
that (almost) all possible resonant terms are not compatible with the
obtained weighted decompositions of the manifold and the vector field,
which allows for the desired simple normal form.

\bigskip
\newpage

\begin{center} \Large \bf Acknowledgements          \end{center}

The first and the second author are supported by the GACR grant 22-15012J. The second author is also supported by the FWF grant P34369 and SusTech internal funding Y01286147, and the third author is supported by the FWF grant 10.55776/I4557.

\bigskip

\section{Initial normalization of vector fields tangent to a real
  hypersurface}

Let $$X=P(z,w)\dop{z}+Q(z,w)\dop{w}\not\equiv 0$$ be a holomorphic
vector field in $\CC{2}$ defined in an open neighborhood $U$ of the
origin and vanishing at $0$.  The first key ingredient is to use a
weight system where the weight of $w$ is $1$ and the weight of $z$ is
0, corresponding to the following expansion for $X$:
\begin{equation}\label{key}
  X = \sum_{\ell = k}^\infty X_\ell,
\end{equation} where
$$X_\ell = \alpha_\ell (z) w^\ell \dop{z} + \beta_{\ell}(z) w^{\ell+1} \dop{w},  $$
and $$X_k\not\equiv 0.$$

We first of all observe that the normalization of $X$ is quite simple
in the case when $\alpha_k(0)\neq 0$. Note that $X(0)=0$ implies $k>0$
in the latter case.
\begin{proposition}\label{ord0}
  Assume that in the expansion \eqref{key} one has
  $\alpha_k(0)=\alpha\neq 0$. Then $X$ is biholomorphically equivalent
  to the monomial vector field $$Y:=\alpha w^k\dop{z}$$ near the
  origin.
\end{proposition}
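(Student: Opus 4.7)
The plan is to reduce the statement to the holomorphic flow-box theorem in two steps: first factor out the common power $w^k$ to obtain a non-vanishing auxiliary vector field, then straighten that vector field and absorb the residual unit into the surviving coordinate.

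To begin I would note that $X(0)=0$ forces $k\geq 1$, and that $\{w=0\}$ is $X$-invariant since the $\partial_w$-component of each $X_\ell$ is divisible by $w^{\ell+1}$. Because every term $X_\ell$ in \eqref{key} already carries the factor $w^k$, one may write $X=w^k\widetilde X$ with $\widetilde X$ holomorphic and
$$\widetilde X(0,0)=\alpha_k(0)\,\partial_z=\alpha\,\partial_z\neq 0.$$

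Next I would apply the holomorphic flow-box theorem to $\widetilde X$, using the transversal $\{z=0\}$ parametrised by $w$. This yields local coordinates $(\tilde z, W)$ near the origin in which $\widetilde X=\partial_{\tilde z}$. Because $\{w=0\}$ is the $\widetilde X$-orbit of the origin, and the chosen transversal meets it only at the origin, $\{w=0\}$ coincides with $\{W=0\}$ in the new chart. Hence $w$ pulls back to a product $W\,h(\tilde z,W)$ with $h$ holomorphic and $h(0,0)=1$, and
$$X=w^k\widetilde X=W^k\,h(\tilde z,W)^k\,\partial_{\tilde z}$$
in these coordinates.

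The final step is to reabsorb the unit $h^k$ into the first coordinate by setting
$$Z(\tilde z,W):=\alpha\int_0^{\tilde z}h(s,W)^{-k}\,ds.$$
Since $h$ is a unit near the origin, this defines a holomorphic function with $Z_{\tilde z}(0,0)=\alpha\neq 0$, so $(Z,W)$ is a local biholomorphism. A direct computation of the pushforward yields $X(W)=0$ and $X(Z)=W^k h^k\cdot\alpha h^{-k}=\alpha W^k$, so $X=\alpha W^k\partial_Z=Y$ in the coordinates $(Z,W)$, as claimed. The only point requiring any care is the identification $\{w=0\}=\{W=0\}$ in the flow-box chart (together with the resulting factorisation $w=Wh$ with $h(0,0)=1$); everything else is bookkeeping. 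No resonance or small-divisor obstruction intervenes because the singular factor $w^k$ is peeled off before any straightening is performed.
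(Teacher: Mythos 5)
Your proof is correct, but it takes a genuinely different route from the paper. The paper writes down the conjugacy equation for a map of the form $z\mapsto f(z,w)$, $w\mapsto wg(z,w)$ directly, obtaining the first-order PDE system \eqref{CK}, and invokes the Cauchy--Kowalewski theorem with initial data on $\{z=0\}$ (which is noncharacteristic precisely because $\alpha\neq 0$). You instead peel off the common factor $w^k$ to get a \emph{nonsingular} holomorphic field $\widetilde X$ with $\widetilde X(0)=\alpha\,\partial_z$, straighten it by the holomorphic flow-box theorem, use the invariance of $\{w=0\}$ under $\widetilde X$ to identify $\{w=0\}$ with $\{W=0\}$ in the flow-box chart (so $w=Wh$ with $h$ a unit), and then absorb the residual unit $h^k$ by the explicit quadrature $Z=\alpha\int_0^{\tilde z}h(s,W)^{-k}\,ds$. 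All the steps check out: the factorization is legitimate because every $X_\ell$ in \eqref{key} with $\ell\ge k$ carries $w^\ell\partial_z$ and $w^{\ell+1}\partial_w$; the orbit of $\widetilde X$ through the origin is a smooth curve contained in the invariant curve $\{w=0\}$, hence equal to it, which justifies $w=Wh$ with $h(0,0)=1$; and the final Jacobian is triangular with nonzero diagonal. Your argument has the virtue of making the absence of any obstruction transparent (the singularity is entirely concentrated in the scalar factor $w^k$), whereas the paper's CK argument is more uniform with the normalization machinery used in the rest of the paper; both yield a convergent (biholomorphic) conjugacy.
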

\begin{proof}
  If we try to map the vector field $X$ to the vector field $Y$ by a
  (biholomorphic) transformation
$$z\mapsto f(z,w), \quad w\mapsto wg(z,w),\quad g(0,0)\neq 0, \quad f_z(0,0)\neq 0,$$
then since
$$X=(\alpha w^k+O(w^kz)+O(w^{k+1}))\dop{z}+O(w^{k+1})\dop{w},$$
it is not difficult to compute that $X$ being mapped to $Y$ is
equivalent to:
\begin{equation}\label{CK}
  \alpha g^k=(\alpha+O(f)+O(w))f_z+O(w)f_w, \quad 0=(\alpha+O(f)+O(w))g_z+O(1)(wg_w+g).
\end{equation}
We see now that the Cauchy-Kowalewski theorem is applicable to the PDE
system \eqref{CK} when choosing any analytic initial data
$f(0,w),g(0,w)$ with $f(0,0)=0$, which proves the proposition.
\end{proof}
In view of \autoref{ord0}, we assume from now on that $\alpha_k(0)=0$
in \eqref{key}.
\\
We will now add the condition that {\em $X$ is tangent to a
  real-analytic Levi-nonflat hypersurface $M$ through $0$} - a real
integral manifold of $X$.  As discussed in the Introduction, we assume
$M$ to be {\em of infinite type at $0$}. We also choose local
holomorphic coordinates in which the infinite type locus of the
hypersurface is $E=\{w=0\}$, and the hypersurface itself takes the
form $$M=\{(z,w) \in U \colon \rho(z,\bar z,w,\bar w)=0\}$$ with
$\rho$ satisfying
\begin{equation}\label{mnonminimal}
  \rho(z,\bar z,w,\bar w)=v-u^m\varphi(z,\bar z,u),\quad\varphi(z,\bar z,u)=\varphi_s(z,\bar z)+O(|z|^{s+1})+O(|u|).
\end{equation}
Here we assume that $\varphi_s$ is homogeneous of degree $s$ in $z$, and
$\varphi(z,0,u)=\varphi(0,\bar z, u) = 0$. Such coordinates are often
referred to as {\em normal} (see \cite{ber}), and the latter condition
states that $\varphi$ does not contain any so-called ``harmonic
terms'' (that is, no terms of the form $z^j u^k$ or $\bar z^j
u^k$). The integer $m>0$ is called the {\em nonminimality order}; it
is a biholomorphic invariant (see Meylan \cite[Corollary
2.3]{meylan}). We then make use of the tangency condition (basic
identity) $$\re X(\rho)=0|_{\rho=0}$$ (precisely meaning that $M$ is
an integral manifold of $X$), and obtain:
\begin{equation}\label{basic}
  \begin{aligned}
    \im\bigl(\beta_{k}(z)w^{k+1}+\cdots\bigr)&=mu^{m-1}\varphi(z,\bar z,u)\re(\beta_k(z)w^{k+1})+u^m\re(\varphi_z(z,\bar z,u)
                                               \alpha_k(z)w^{k})+\\
                                             &+u^m\varphi_u(z,\bar z,u)\bigr)\re(\beta_k(z)w^{k+1})+\cdots.
  \end{aligned}
\end{equation}
Here dots stand for terms arising from $X_\ell$ with $\ell>k$, and in
order to have a true equality, we have to substitute
$$w = u + i u^m \varphi (z , \bar z, u).$$ We shall now make some
important conclusions from the basic identity \eqref{basic}, using
extensively the absence of harmonic terms in $\varphi$. As a first
consequence, comparing harmonic terms of the kind
$z^ju^{k+1},\,j\geq 0$, we see that $$\beta_k(z)=B\in\RR{}$$
(i.e. $\beta_k(z)$ is a {\em real constant}).

We now have the following trichotomy (recall that we assume
$\alpha_k(0)=0$):

\begin{compactenum}
\item $\alpha_k \equiv 0$ ;
\item $\alpha_k \not\equiv 0$, $\beta_k \equiv B = 0$;
\item $\alpha_k \not\equiv 0$, $\beta_k \equiv B \neq 0$;
\end{compactenum}

We refer to the first two cases as {\em exceptional} and to the third
one as {\em generic}. Let us make the following simple remarks:

\begin{remark}\label{B=0}
  If $B=\beta_k=0$, then $X_k\not\equiv 0$ implies
  $\alpha_k\not\equiv 0$, i.e. we are in case (2). Then, considering
  in the basic identity terms with $u^{k+m}z^i\bar z^j,\,i,j>0$, we
  see that they can occur only from
  $u^m\re(\varphi_z(z,\bar z,u)\alpha_k(z)w^{k})$. Among the latter
  ones, we consider that with the minimal total degree in $z,\bar z$
  (they must come from $\varphi_s$ and the minimal degree term in
  $\alpha_k(z)$). Then, since their sum must {\em vanish}, it is easy
  to conclude that the only possibility is that \beq\label{ord1}
  \mbox{ord}_0\,\alpha_k=1.  \eeq If we write
  $\alpha_k (z) = Az + \dots $, we see that furthermore $A\in i\RR{}$
  and $\varphi_s=|z|^s$ for an even $s$.
\end{remark}
\begin{remark}
  In the generic case, $B\neq 0$, $\alpha_k\not\equiv 0$, if we again
  consider terms with $u^{k+m}z^i\bar z^j,\, i,j>0$ in the basic
  identity, we see that they arise from
  $u^m\re(\varphi_z(z,\bar z,u)\alpha_k(z)w^{k})$,
  $mu^{m-1}\varphi(z,\bar z,u)\re(\beta_k(z)w^{k+1})$, and
  $\im\bigl(\beta_{k}(z)w^{k+1})$. Since they have to agree with each
  other, it is not difficult to conclude, once again, that
  \eqref{ord1} holds.
\end{remark}

We now proceed with the {\em generic} case, i.e. we assume
$$B\neq 0,\quad \alpha_k(0)=0,\quad A:=\alpha_k'(0)\in \CC{*},$$
and will first give a {\em formal} normalization of the vector field.
For this formal normalization, we do {\em not} assume that $X$ is
tangent to a real hypersurface yet.  Recall that we work with the
grading which associates to $w$ the weight $1$ and to $z$ the weight
$0$, so that for each $\ell\geq 0$ the change of coordinates
\[ \begin{aligned}
  z &= x + y^\ell f(x) \\
  w&= y + y^{\ell+1} g(x),
\end{aligned} \]
is homogeneous of degree $\ell$ in our grading, with the additional understanding that for
$\ell = 0$ we assume that $f = O(x^2)$ and $g = O(x)$. We attempt to remove inductively all terms of homogeneity larger than $k$ (i.e. $X_j$ with $j>k$), and for $j=k$ we attempt to leave only the leading term $w^k(Az\dop{z}+Bw\dop{w})$. Thus, we assume that all $X_j$ with $k\leq j<k+\ell$ are already normalized, and under the above change of coordinates 
the vector field $X$ transforms into:
\[ \begin{aligned}
  \sum_{j<\ell} &\left(   {\alpha_{k+j}}(x) y^{k+j} \dop{x} + \beta_{k+j} y^{{k+j}+1} \dop y  \right) + \\
                &+\left( ({\alpha_{k}'}(x) - \ell B) f(x) -
                  {\alpha_{k}}(x) f'(x) + {k} {\alpha_{k}}(x) g(x)
                  \right) y^{{k}+\ell} \dop{x} \\& + \left(
  ({k}-\ell)B g(x) - {\alpha_{k}}(x) g'(x) \right)y^{{k}+\ell+1}
  \dop{y} + \dots.
\end{aligned} \]
The dots here denote terms of degree exceeding $k+\ell$.

Thus normalizing $X$ to order $k+\ell$ amounts to solving the
following system of Fuchsian (Briott-Bouquet) equations (see
e.g. \cite{laine}):
\begin{equation}
  \label{e:secondsystem} \begin{aligned}
    {A} x f'(x)  - ({A} - \ell B) f(x)  &= O(x^2) f'(x)  + O(x)  f(x) + O(x) g(x) -\alpha_{k+\ell} (x) \\ 
    - B f(x) + {A}x  g'(x)  - ({k}-\ell) B g(x) &=O(x^2) g'(x)  + O(x)  f(x) + O(x) g(x) - \beta_{k+\ell }(x).
  \end{aligned}
\end{equation}
We shall recall the general fact that for a Briott-Bouquet ODE
$$xdY/dx=TY+H(x,Y), \quad T\in\mbox{Mat}(n,\CC{}),\,\,H(0)=0,\,dH(0)=0,$$ 
a formal solution exists if no $n\in\mathbb Z_{\geq 0}$ is an
eigenvalue of the matrix $T$ (and in case $H$ is analytic, the formal
solution is analytic too). If, otherwise, there are nonnegative
integers $n\in\mbox{spec}\,T$, a formal solution exists upon
"correcting" $H(x,Y)$ with a term $cx^n$ (here $c$ is a constant
vector).  It follows that we can remove all powers $x^n$ from
$\alpha_{k+\ell}$ and $\beta_{k + \ell}$ except for those $n$ for
which the matrix
\begin{equation}\label{e:matrix} \begin{pmatrix}
  {A} n - ({A} - \ell B) & 0 \\ B & {A} n - (k -\ell) B
\end{pmatrix}  \end{equation}
fails to be of full rank. If we write $\lambda = \frac{B}{A} $ and set
\[ n_1(\ell) = 1 - \ell \lambda, \quad n_2(\ell) = (k - \ell)
  \lambda, \] then we can normalize to
$\alpha_{k+\ell} (x) = c_\ell x^{n_1 (\ell)}$ and
$\beta_{k+\ell} (x) = d_\ell x^{n_2(\ell)}$ (with the understanding
that $c_\ell = 0$ if $n_1 (\ell)$ is not an integer and $d_\ell = 0$
if $n_2 (\ell)$ is not an integer).  For $\ell = 0$, $n_1 (0) = 1$,
and $n_2 (0) = k \lambda$; however, we also have to take into account
that (because for $\ell =0$, $g = O(x)$) we cannot remove the constant
$B$.

Using this argument inductively, removing all possible terms of
homogeneity $k, k+1,\dots k +\ell$ and taking the formal limit as
$\ell \to \infty$ of the associated coordinate changes, we obtain the
following.

\begin{proposition}
  \label{pro:normal2} Let $X$ be generic, with
  $X_k = A z w^k \dop{z} + B w^{k+1} \dop{w} + \dots$, where
  $A \neq 0$ and $B\neq 0$, and let
$$\lambda = B/A,\,\,\mu:=1/\lambda.$$ 
Set $$n_1(\ell) = 1 - \ell \lambda, \,\, n_2(\ell) = (k - \ell) \lambda.$$
Then there exists a formal change of coordinates of the form
$(z,w) \mapsto ((z+ O(w), w +O(w^2) )$ such that in the new
coordinates, $X$ can be written (after rescaling) as
\begin{equation}\label{e:normal2} X = \left(\mu z +
    \sum_{\substack{\ell > 0 \\ n_1 (\ell) \in \N} } c_{\ell} z^{n_1
      (\ell)} w^{\ell}\right)w^{k} \dop{z}
  + \left( 1+ \sum_{\substack{\ell \geq 0  \\ n_2 (\ell) \in \N } } d_{\ell} z^{n_2 (\ell)} w^{\ell} \right)w^{k+1} \dop{w}. \end{equation}
\end{proposition}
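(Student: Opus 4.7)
The plan is to realize as a formal change of coordinates the inductive normalization already sketched in the preceding paragraphs. Concretely, I will proceed by induction on $\ell \geq 0$, maintaining the hypothesis that after finitely many formal transformations the vector field has its weight components $X_{k}, \ldots, X_{k+\ell-1}$ in the shape prescribed by \eqref{e:normal2}. I begin with a rescaling of $X$ by $1/B$, turning the leading weight piece into $\mu z w^k \dop{z} + w^{k+1} \dop{w} + (\text{higher order in } z)$, so that the target coefficients $\mu$ and $1$ appear in front of the dominant terms, with $\mu = 1/\lambda = A/B$ as required.

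At each inductive step I apply the degree-$\ell$ substitution $z = x + y^\ell f(x)$, $w = y + y^{\ell+1} g(x)$, with the convention $f = O(x^2)$, $g = O(x)$ when $\ell = 0$. A direct computation (already carried out above) shows that this substitution fixes the already-normalized components $X_{k}, \ldots, X_{k+\ell-1}$ and modifies $X_{k+\ell}$ by precisely the linear Fuchsian system \eqref{e:secondsystem} in the unknowns $f, g$. Reading this system coefficient-by-coefficient in $x$ yields the matrix \eqref{e:matrix}, which is singular exactly when $n = n_1(\ell) = 1 - \ell \lambda$ or $n = n_2(\ell) = (k - \ell)\lambda$. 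Invoking the Briot-Bouquet theorem recalled in the text, I can then annihilate every power of $x$ in $\alpha_{k+\ell}$ and $\beta_{k+\ell}$ except the resonant monomials $c_\ell x^{n_1(\ell)}$ and $d_\ell x^{n_2(\ell)}$; these in turn are automatically zero unless $n_1(\ell)$, respectively $n_2(\ell)$, is a nonnegative integer, which matches exactly the summation conditions in \eqref{e:normal2}. Because the degree-$\ell$ substitution is tangent to the identity up to order $\ell$ in $y$, the composition $\Phi_\ell \circ \cdots \circ \Phi_0$ stabilizes coefficient-wise, and $\Phi := \lim_{\ell \to \infty} \Phi_\ell \circ \cdots \circ \Phi_0$ is a well-defined formal transformation bringing $X$ to \eqref{e:normal2}.

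The step I expect to require the most care is $\ell = 0$: the restrictions $f = O(x^2)$, $g = O(x)$ are precisely what prevents us from touching the leading piece $\mu z w^k \dop{z} + w^{k+1} \dop{w}$, since the otherwise-resonant $n = 1$ mode in the $f$-equation and the $n = 0$ mode in the $g$-equation lie outside the admissible range of the transformation. I will have to check this compatibility by hand and verify that the only resonant correction which can survive at this weight is $d_0 x^{n_2(0)} = d_0 x^{k\lambda}$ in $\beta_k$, exactly when $k\lambda \in \Z_{\geq 0}$, consistent with the $\ell = 0$ term of the sum in \eqref{e:normal2}.
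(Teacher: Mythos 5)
Your proposal is correct and follows essentially the same route as the paper: rescale so the leading part becomes $\mu z w^k\partial_z + w^{k+1}\partial_w$, then inductively apply the degree-$\ell$ homogeneous substitutions, reduce each step to the Fuchsian system \eqref{e:secondsystem}, identify the resonances via the singularity of the matrix \eqref{e:matrix} at $n=n_1(\ell)$ and $n=n_2(\ell)$, and pass to the formal limit. Your separate treatment of $\ell=0$ (where $f=O(x^2)$, $g=O(x)$ protect the leading coefficients $\mu z$ and the constant $1$) is exactly the caveat the paper records.
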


\begin{remark}\label{exceptcase}
  It is possible, by inspecting the proof of \autoref{pro:normal2}, to
  extend its assertion to the exceptional case $B=0$, and obtain the
  (formal) normalizations: \beq iz\hat F(w)\dz+\hat G(w)\dw \eeq for
  some formal power series $\hat F(w),\hat G(w)$ with $\hat F(0)=1$
  and $\hat G(0)=\hat G'(0)=0$ (taking also into account
  \autoref{B=0}).
\end{remark}
\begin{remark}\label{rational}
  It is visible from \autoref{pro:normal2} that if
  $\lambda \not\in \mathbb{Q}_-$, then the prenormal form
  \eqref{e:normal2} becomes complete:
  \begin{equation}\label{complete}
    X =  \mu z w^k\dop{z} 
    +w^{k+1} \dop{w}, \qquad \mu := \frac{1}{\lambda}. 
  \end{equation}
  Note that this normal form coincides with the ones by
  Lombardi-Stolovitch \cite{LS} ($k>0$) or Poincar\'e-Dulac ($k=0$)
  \cite{ilyashenko}). If we assume that $\im\mu\neq 0$ then all the
  "small divisors"
 $$S=\Bigl\{|\mu-m_1\mu-m_2|,\,|k+1-m_1\mu-m_2|\Bigr\}_{m_1+m_2\geq 2,\, m_1,m_2\geq 0}$$
 are separated away from $0$, and so the convergence results in
 \cite{ilyashenko,LS} imply that one can choose a {\em convergent}
 transformation to the normal form \eqref{complete}.
\end{remark}
Note that there exists, in principle, {\em many} distinct
transformations bringing to the formal normal form
\eqref{e:normal2}. We demonstrate below that one can in fact always
choose a {\em biholomorphic} transformation to (some) normal form of
the kind \eqref{e:normal2}.

In view of \autoref{pro:normal2} it is natural to introduce yet
another weight system,
based on the fact that
$n_1 (\ell)$ and $n_2(\ell)$ in \eqref{e:normal2} satisfy
$n_1(\ell) = 1 - \ell \lambda$, $n_2(\ell) = (k - \ell) \lambda$;
thus,

if we assign the weight  $ \mu$ to $z$ and $1$ to $w$, each of the 
monomials $z^{n_1(\ell)} w^\ell$ has weight $\mu$ and $z^{n_2 (\ell)} w^\ell$ has weight $k$. Hence we arrive at the following decomposition (partial normal form) into vector fields homogeneous with 
respect to this new weight system: 
\begin{equation}
\label{k2k} X = Z_k + Z_{2k}, 
\end{equation} 
where we set 
\[ \begin{aligned}
Z_k &= \left(\mu z+\sum_{\substack{\ell \geq 0 \\ n_1 (\ell) \in \N} } c_{\ell} z^{n_1 (\ell)} w^{\ell}\right)w^{k}  \dop{z} = \mu \gamma(z,w) z  w^{k} \dop{z} + 
  w^{k +1} \dop{w} \\ \\
Z_{2k} &= \left( \sum_{\substack{\ell \geq 0  \\ n_2 (\ell) \in \N } } d_{\ell} z^{n_2 (\ell)} w^{\ell}\right) w^{k+1}\dop{w} = \delta(z,w) w^{k +1} \dop{w},
\end{aligned} \]
with $\gamma(t^{\mu} z, t w) = \gamma(z,w)$, and
 $\delta(t^{\mu} z, t w) = t^{k} \delta (z,w)$.

\begin{remark}
The partial normalization bringing $X$ to the normal form \eqref{k2k} {\em does} destroy in principle the special coordinates \eqref{mnonminimal}; this will be taken care of in the next section. 
\end{remark}

\section{The complete normalization in the generic case}

We continue with the assumptions and notations of the last section, and assume, in addition, that
\begin{equation}\label{rat}
\lambda = \mu^{-1} \in\mathbb Q^-
\end{equation}
(according to \autoref{rational}).

We start  by proving the following useful proposition about formal 
biholomorphisms keeping the partial normal form intact.  

\begin{proposition}\label{pro:deletion1} Assume that $X$ is in the prenormal form \eqref{e:normal2} or equivalently \eqref{k2k}. If 
 $\varphi(z,w)$ is homogeneous of 
degree $\mu$ with $\varphi(z,0) = 0$, and  $\nu(z,w)$ is homogeneous of degree $k$ (with $\nu(0,w) = 0$ if $k = 1$), then there
exists a unique formal biholomorphism of the form $h=(z+ \varphi + f, w + \nu + g)$ with $f$ containing
only terms of homogeneity exceeding $\mu$, and 
$g$ containing only terms of homogeneity exceeding 
$k$ such that $h_* X$ is again in normal form. 
\end{proposition}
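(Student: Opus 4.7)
The plan is to build $h$ by induction on weight with respect to the grading $\mathrm{wt}(z)=\mu$, $\mathrm{wt}(w)=1$ introduced just before \eqref{k2k}. Decomposing $f = \sum_{s > \mu} f^{(s)}$ and $g = \sum_{s > k} g^{(s)}$ into weighted homogeneous components, I would solve for $f^{(s)}$ and $g^{(s)}$ successively, arranging at each step that all non-resonant monomials of weight $\le s$ have been eliminated from $h_*X$ and only allowed normal-form terms (possibly with new coefficients) survive.

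First, I would expand the pushforward via the Lie series
\[
h_*X = X + [V,X] + \tfrac12 [V,[V,X]] + \cdots,
\]
where $V = (\varphi+f)\dop{z} + (\nu+g)\dop{w}$ is the infinitesimal generator of $h$. Since $X = Z_k + Z_{2k}$ with $Z_k$ weighted-homogeneous of weight $k$ and $Z_{2k}$ of weight $2k$, the combinations of $V$ with these components produce contributions in definite weights, so $h_*X$ decomposes cleanly weight by weight. The demand that $h_*X$ again lie in the prenormal form \eqref{e:normal2} translates, at each weight $s$, into a cohomological equation of the shape
\[
[V^{(s)}, Z_k] \;=\; R^{(s)} \;+\; \Psi^{(s)},
\]
where $V^{(s)} = f^{(s)} \dop{z} + g^{(s)} \dop{w}$ is the unknown, $R^{(s)}$ collects contributions from $\varphi,\nu$ and from $f^{(s')}, g^{(s')}$ with $s' < s$ (all already determined at this stage), and $\Psi^{(s)}$ is the adjustment to the normal-form coefficients in weight $s$.

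Next I would analyze this cohomological equation. Because $Z_k = \mu \gamma(z,w) z w^k \dop{z} + w^{k+1}\dop{w}$ has the same diagonal-plus-leading-perturbation structure as the vector fields in Section~2, $\mathrm{ad}_{Z_k}$ acts on a weight-$s$ monomial pair $(z^a w^b \dop{z}, z^{a'} w^{b'} \dop{w})$ essentially through the same $2\times 2$ Fuchsian block as in \eqref{e:matrix}, whose vanishing determinant precisely detects the resonant monomials from \eqref{e:normal2}. Consequently, for each non-resonant monomial the equation admits a unique solution in the $f^{(s)}, g^{(s)}$ unknowns, while the resonant monomials span the cokernel and must be absorbed into the coefficients $c_\ell, d_\ell$ (and, at weight $k$, into $\mu$) of $\Psi^{(s)}$. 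This simultaneously determines $f^{(s)}, g^{(s)}$ and the new normal-form data uniquely.

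The main delicacy lies in the lowest weight slots $s = \mu$ and $s = k$, where the kernel of the cohomological operator is precisely where $\varphi$ and $\nu$ live. There the freedom in prescribing $\varphi$ and $\nu$ exactly accounts for the dimension of the kernel, and the side conditions $\varphi(z,0) = 0$ and (when $k=1$) $\nu(0,w) = 0$ cut out unique representatives modulo the $1$-parameter families of trivial diagonal rescalings of $z$ and, in the degenerate case $k=1$, of $w$ (which exist because the weights of $\mu z w^k \dop{z}$ and of $w^{k+1}\dop{w}$ then coincide). Once these boundary conditions are imposed the induction proceeds without obstruction and yields simultaneously the existence and uniqueness of $h$.
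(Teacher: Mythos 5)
Your proposal is correct and follows essentially the same route as the paper: an induction over the attained weighted homogeneities in which each step reduces to the homological equation $[H,Z_k]=R$, with the resonant slots (generator weights $0$ and $k$, where $\varphi$ and $\nu$ live and where the coefficients $c_\ell,d_\ell$ survive) identified as the kernel and cokernel of the same triangular $2\times 2$ block underlying \eqref{e:matrix}. The only difference is packaging: the paper composes elementary homogeneous coordinate changes and computes $[H,Z^*]$ explicitly via the Euler relations for weighted-homogeneous $f,g$ (obtaining the diagonal entries $-\ell$ and $-(k-\ell)$ directly, so that the system is visibly uniquely solvable off the weights reserved for $\varphi$ and $\nu$), whereas you organize the same data through the Lie series of a single generator and assert the form of the homological operator by analogy with Section~2 rather than deriving it.
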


 \begin{proof} We  study the transformation of the the vector field $X$ under a 
 transformation of the form $(z^*,w^*) \mapsto (z^* + f(z^*, w^*), w^* + g(z^*, w^*))$, with $f$ and $g$ homogenous, of degree $ \ell + \mu$ and $\ell + 1$, respectively. This is in our new grading, i.e. $f(t^\mu z^* , t w^*) = t^{\ell+\mu}f(z^*, w^*) $
 and $g(t^\mu z^* , t w^*) = t^{\ell +1}g(z^*, w^*) $, and we do not assume that $\ell$ 
 is an integer here. This means that  
 the vector field 
 \[ H = f(z^*, w^*) \dop{z^*} + g(z^*, w^*) \dop{w^*}  \]
 is homogeneous of degree $ \ell$. 

 Note that up to terms of homogeneity $> k + \ell$, we have that in the new
 coordinates, 
 \[ \begin{aligned}
 Z^* &= \lambda z^* (w^*)^{k} \dop{z^*} +  (w^*)^{k + 1} \dop{w^*} \\ 
 & \quad + \lambda (f(z^*, w^*) (w^*)^{k} + k z^* (w^*)^{k - 1} g(z^*, w^*) )  \dop{z^*} + (k+1)(w^*)^{k + 1}g(z^*, w^*) \dop{w^*} \\ &\quad -
 (\lambda z^* (w^*)^{k}  f_{z^{*}}   +   (w^*)^{k + 1} f_w^*) \dop{z^*} 
  - (\lambda z^* (w^*)^{k}  g_{z^{*}}   +  (w^*)^{k + 1} g_w^*) \dop{w^*} \\
  & = Z^* + [H, Z^*]. 
 \end{aligned}  \] 
 But since $H$ is homogeneous of degree $\ell$ with respect 
 to the weight system $(\mu, 1)$,  we have 
 \[ \begin{aligned} {} [H, Z^*]
  & = \lambda (f(z^*, w^*) (w^*)^{k} + k z^* (w^*)^{k - 1} g(z^*, w^*) )  \dop{z^*} + (k+1)(w^*)^{k + 1}g(z^*, w^*) \dop{w^*} \\ &\quad -
 (\lambda z^* (w^*)^{k}  f_{z^{*}}   +  (w^*)^{k + 1} f_w^*) \dop{z^*} 
  - (\lambda z^* (w^*)^{k}  g_{z^{*}}   +  (w^*)^{k + 1} g_w^*) \dop{w^*} \\
  &= \lambda (f(z^*, w^*) (w^*)^{k} + k z^* (w^*)^{k - 1} g(z^*, w^*) )  \dop{z^*} + (k+1)(w^*)^{k + 1}g(z^*, w^*) \dop{w^*} \\ &\quad -
 (\lambda z^*   f_{z^{*}}   +   w^* f_w^*)(w^*)^{k} \dop{z^*} 
  -  (\lambda z^*   g_{z^{*}}   +   w^* g_w^*) (w^*)^{k} \dop{w^*} \\
  &= \lambda (f(z^*, w^*) (w^*)^{k} + k z^* (w^*)^{k - 1} g(z^*, w^*) )  \dop{z^*} + (k+1)(w^*)^{k + 1}g(z^*, w^*) \dop{w^*} \\ &\quad -
 ((\ell + \mu) f)(w^*)^{k} \dop{z^*} 
  -  ((\ell+1 )g ) (w^*)^{k} \dop{w^*} \\
  & = \left( -  \ell w^* f + k z^*  g \right) (w^*)^{k - 1} \dop{z}
  -  ((k - \ell )g ) (w^*)^{k} \dop{w^*}.
 \end{aligned} \]

Recall that $\mu \in \mathbb{Q}$. We order 
the attained homogeneities by size, 
$k < k_1 < \dots$. Let us assume that we have
already constructed $h_p$ for some $p\in\N$ such 
that 
\[h_p = \begin{cases}(z + \varphi + f_p, w ) & k_p < 2 k \\
(z + \varphi + f_p, w + \nu + g_p) & k_p \geq 2 k
\end{cases} \]
with $(h_p)_* X $ in normal form up to homogeneous
terms of order exceeding $k_p$; denote
the terms of homogeneity 
exactly $k_{p+1}$ in $(h_p)_* X$ by $R \dop{z} + S \dop{w}$. We then construct 
$(f,g)$ by solving the equations 
\[
\begin{aligned}
(- B k_{p+1}  f(z,w) + k z  g(z,w)) w^{k} &= -  R \\ 
  - B (k - k_{p+1} )g (z,w)  w^{k}  &= - S 
\end{aligned}
\]
with $g$ homogeneous of degree $k_{p+1} - k$
and $f$ homogeneous of degree $k_{p+1} - k + \mu$.

By the computation above, we now see that $h_{p+1} = (z + \varphi + f_p +f , w + \nu + g_p + g) $ (with 
the obvious adaptation for small $p$) has the 
property that $(h_{p+1})_* X$ is in normal form 
up to terms of homogeneity exceeding $k_{p+1}$.
We observe that $h = \lim_{p \to \infty} h_p$ 
exists as a formal biholomorphism and conclude the statement of the proposition. 
\end{proof}

Assume now again that $X$, given in some holomorphic coordinates \eqref{k2k}, is an infinitesimal automorphism of an $m$-nonminimal hypersurface $M$ through $0$. We decompose the defining function $v = \psi(z,\bar z, u)$ of $M$  into terms which are homogeneous with respect to 
the above grading $[z]=\mu,\,[w]=1$. The corresponding index set of possible homogeneities is $\Gamma = \mu \mathbb{N} + \mathbb{N} $ and we write
\[ \psi(z,\bar z,u) = \sum_{\ell \in \Gamma} \psi_\ell (z,\bar z, u ), \quad 
\psi_\ell (t^\mu z, t^\mu \bar z, t u) = \ell \psi_\ell (z,\bar z, u ). \] The next Lemma is a key property. 

\begin{lemma}
\label{lem:startingterm} Assume that $X = Z_{k} + Z_{2 k}$ is an infinitesimal automorphism in the form \eqref{k2k} of the nonminimal real hypersurface through $0$
defined by $v = \psi(z,\bar z, u) = \sum_\ell \psi_\ell (z,\bar z,u)$. Then $\psi_\ell = 0$ for $\ell \leq k$. 
\end{lemma}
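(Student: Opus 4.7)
The plan is to extract from the tangency condition $\re X(v - \psi) = 0$ an equation at the lowest relevant weighted degree and then use Euler's identity to derive a contradiction.

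Rewriting the tangency as $\im Q = \psi_u \re Q + 2\re(P \psi_z)$ on $M$ (substituting $w = u + i\Psi$ with $\Psi := \psi(z, \bar z, u)$), I assume for contradiction that $\psi_{\ell_0} \not\equiv 0$ for the smallest $\ell_0 \le k$ in the weight decomposition, and I extract the weight-$(k + \ell_0)$ component of this identity. From the form \eqref{k2k} with $Q = (1 + \delta(z, w))\, w^{k+1}$ and $P = \mu\, \gamma(z, w)\, z\, w^k$, the $\delta$-contributions to $\im Q$ sit at weight $\ge 2k + 1$, and the $\Psi^j$-contributions for $j \ge 2$ at weight $\ge (k + 1 - j) + j \ell_0$, so in the principal regime only the linear-in-$\Psi$ pieces survive. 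After dividing by $u^k$ the component equation becomes
\[ (k+1)\, \psi_{\ell_0} \;=\; u\, (\psi_{\ell_0})_u + 2\re\!\bigl[\mu\, \gamma(z, u)\, z\, (\psi_{\ell_0})_z\bigr]. \]
Invoking Euler's identity $u(\psi_{\ell_0})_u + \mu z (\psi_{\ell_0})_z + \mu \bar z (\psi_{\ell_0})_{\bar z} = \ell_0\, \psi_{\ell_0}$ (valid since $\psi_{\ell_0}$ is weighted homogeneous of weight $\ell_0$), together with $\mu \in \R$, transforms this into
\[ (k + 1 - \ell_0)\, \psi_{\ell_0} \;=\; \mu\, (\gamma(z, u) - 1)\, z\, (\psi_{\ell_0})_z + \mu\, (\overline{\gamma(z, u)} - 1)\, \bar z\, (\psi_{\ell_0})_{\bar z}. \]

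The decisive structural input is that
\[ \gamma(z, w) - 1 \;=\; \mu^{-1}\!\!\sum_{\substack{\ell \ge 1 \\ n_1(\ell) \in \N}}\! c_\ell\, z^{n_1(\ell) - 1}\, w^\ell \]
contains only terms of $w$-power $\ge 1$, so $\gamma(z, u) - 1$ is divisible by $u$. Hence the right-hand side of the previous display has $u$-power strictly greater than the smallest $u$-power present in $\psi_{\ell_0}$, while the left-hand side carries that $u$-power with the nonzero factor $(k + 1 - \ell_0) \ge 1$ (using $\ell_0 \le k$). Equating the coefficients at the smallest $u$-power forces $\psi_{\ell_0} \equiv 0$, contradicting the choice of $\ell_0$.

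The main obstacle I expect is to rigorously ensure that at weight $k + \ell_0$ the nonlinear-in-$\Psi$ contributions indeed do not intrude. The weight count $(k + 1 - j) + j \ell_0 > k + \ell_0$ (for $j \ge 2$) works cleanly whenever $\ell_0 \ge 2$; in the boundary range $\ell_0 \le 1$, where such contributions coincide in weight with the principal one, I would augment the argument by a secondary induction over the $u$-exponent---first excluding offending monomials at the smallest allowed $u$-power $c = m$ where $\psi = u^m \varphi$ begins, and then propagating upward in $c$ to eliminate the remaining low-$\ell_0$ terms.
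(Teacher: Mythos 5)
Your core computation---extracting the linear-in-$\psi$ part of the tangency equation at a fixed weight, combining it with the Euler identity for weighted homogeneity, and observing that $\gamma-1$ is divisible by $w$ so that the non-resonance factor $(k+1-\ell_0)$ kills the lowest-order part---is exactly the mechanism the paper uses. But the way you organize the induction has a genuine gap that the paper's organization is specifically designed to avoid. Recall that in this section $\lambda=\mu^{-1}\in\mathbb Q^-$, so $\mu<0$ and the weight of a monomial $z^a\bar z^bu^c$ is $\mu(a+b)+c$, which is \emph{unbounded below} as $a+b$ grows. Consequently the index set $\Gamma\cap(-\infty,k]$ is infinite with no lower bound, and a power series such as $\sum_n |z|^{2n}u^2/n!$ has nonzero weighted components at weights tending to $-\infty$. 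Your opening move---``assume $\psi_{\ell_0}\not\equiv0$ for the \emph{smallest} $\ell_0\le k$''---is therefore not well-founded: the minimum need not exist, so the contradiction argument has no starting point.

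The second problem is that the regime you defer, $\ell_0\le 1$, is not a boundary case but contains essentially all of the components the lemma must kill (every monomial with many $z$'s and few $u$'s has small or negative weight, and nonlinear contributions such as $u^{k-1}\psi_{j_1}\psi_{j_2}$ with $j_1+j_2=\ell_0+1$, $j_i\ge\ell_0$, genuinely land at weight $k+\ell_0$ whenever $\ell_0\le1$; the $\delta$-terms also intrude there once one expands $\delta(z,u+i\psi)$). The ``secondary induction over the $u$-exponent'' you gesture at is precisely where the work lies, and the clean way to carry it out is the paper's: decompose $\psi=\sum_p\theta_p$ by the \emph{degree $p$ in $(z,\bar z)$} (a well-ordered index set), and prove by induction on $p$ that $\ord_u\theta_p\ge k-\mu p+1$, which is equivalent to the statement of the lemma. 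With that bookkeeping, the nonlinear terms at stage $p$ involve only $\theta_j$ with $j<p$ and are controlled by the inductive hypothesis, while the linear equation for $\theta_p$ is exactly your display restricted to $z$-degree $p$. So your identity is the right one, but you need to re-index the induction by $z$-degree rather than by weight for the argument to close.
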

\begin{proof} We write as before
 \[ \begin{aligned}
Z_k &=  \mu \alpha(z,w) z  w^{k} \dop{z} + 
  w^{k +1} \dop{w} \\ \\
Z_{2k} &= \delta(z,w) w^{k +1} \dop{w},
\end{aligned} \]
with $\alpha $ homogeneous of degree $0$, 
$\delta$ homogeneous of degree $k$, and
\[ \psi = \sum_p \theta_p (z,\bar z,u),  \quad \theta_p (tz, t \bar z,u) = t^p \theta(z,\bar z, u),  \]
and prove by induction on $p$ that $\theta_p$ vanishes to order at least $k  - \mu p +1 $ in $u$. This immediately implies the assertion of the lemma.

 We need to make use of the 
tangency equation 
\begin{equation}\label{e:tangencyweighted}\begin{aligned}
0&= \real \left( X (v - \psi(z,\bar z, u) ) \right)|_{v = \psi(z, \bar z,u)} \\
&= \real \left( \left( 1 + \delta(z,u+i \psi)\right) (u+ i \psi)^{k+1} \left( \frac{1}{2i} - \frac{1}{2}\psi_u  \right) - \mu \alpha (z,u+ i \psi) z (u+ i \psi)^{k } \psi_z   \right) \\ 
&= \real \left( (1+\delta) \frac{u^{k+1}}{2i} - \mu \alpha z u^k \psi_z  +
(1+ \delta)  \frac{(k+1) u^{k} \psi}{2}\right)   
\end{aligned}
\end{equation}

The starting point is the case $p=0$. Evaluating \eqref{e:tangencyweighted} for $z = 0$, and writing $\theta_0 (u)$ instead of $\theta_0 (z,\bar z, u)$, gives 
\[ 0= \real \left( \left( 1 + (u+ i \theta_0 (u))^{k} d_{k}\right) (u+ i \theta_0 (u))^{k} \left( \frac{1}{2i} - \frac{1}{2} \theta'_0 (u)  \right)  \right). \]
Let $\theta_0 (u) = u \tilde \theta_0 (u)$. Then we have 
\[ \begin{aligned}
0&= \real \left( \left( 1 + u^{k} (1+ i \tilde \theta_0 (u))^{k} d_{k}\right) (1+ i \tilde \theta_0 (u))^{k+1} u^{k+1} \left( \frac{1}{2i} - \frac{1}{2}( u \tilde \theta'_0 (u) + \tilde \theta_0 (u)  )\right)  \right) \\
& = u^{k+1} \left(  -\frac{1}{2} u \tilde \theta'_0 (u) + \frac{k}{2} \tilde\theta_0 (u) + \dots  \right),
\end{aligned} \]
with the $\dots$ signifying nonlinear terms (in $u, u \tilde \theta'_0 (u), \tilde\theta_0(u)$). It follows that $\tilde \theta_0 $ vanishes to order $k -1$, 
and hence $\theta_0$ vanishes to order $k$ as claimed.

In order to see the induction step, consider now the terms on the right hand side of \eqref{e:tangencyweighted} which are of some 
degree $p$ in $z$, assuming that $\ord_u \theta_j (z,\bar z,u) \geq k - \mu j +1 $ for $j\leq p-1$. The contributions containing only terms of the form $\theta_j$ 
with a $j\leq p-1$ are necessarily nonlinear in those terms. The general such nonlinear term only contains monomials 
in $z,\bar z, u$ of joint homogeneity exceeding $k$ by induction (here we use that $k\geq 1$). Therefore, it is enough to 
consider the equation involving $\theta_p$ (necessarily linearly so), which becomes (after replacing $\theta_p$ by $\tilde \theta_p$ with $\theta_p (u) = u \tilde \theta_p (u)$ and cancelling $u^{k + 1}$)
\[ \frac{1}{2} \left(  k  \tilde \theta_p (u) -  u \tilde \theta_p' (u) - \mu z \tilde  \theta_{p;z} (u) - \bar \mu \bar z \tilde \theta_{p, \bar z} \right)   = \dots,\]
where the $\dots$ stands for terms vanishing to the correct order. It follows that the first nonzero term in the expansion of $\tilde \theta_p$ occurs 
with $u^{k - \mu p} $ as claimed. 
\end{proof}

Our next goal is 
\begin{proposition}
\label{l:deletemost} Let $M$ be a nonminimal 
hypersurface through $0$, $X \in \aut (M,0)$
be of generic type, and $\mu$ as above. Then there exist formal coordinates
such that $\psi$ satisfies
\[ \psi (z,0,u)  = \bar \psi(0,\bar z, u) = 0 
\text{ and } \tilde \psi_{k} (z, \bar z , u) =
C z^p (\bar z)^q u^{k - \mu (p+q)} + O (z^{p+1}).  \]
In these coordinates,  $X$ (after 
possibly rescaling by a real number) takes the form
\begin{equation}\label{nfgen}
X= \mu zw^{k} \dop{z} + w^{k+1} \dop{w} + rw^{2k+1} \dop{w}, \quad r\in\R,   
\end{equation}
\end{proposition}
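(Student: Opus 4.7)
My plan is to exploit the residual coordinate freedom from \autoref{pro:deletion1} to simultaneously normalize the defining function $\psi$ of $M$ and eliminate almost all resonant coefficients of $\gamma,\delta$ in the partial normal form \eqref{k2k} of $X$. By \autoref{lem:startingterm}, we have $\psi=\sum_{\ell>k}\psi_\ell$ in the grading $[z]=\mu$, $[w]=1$, so the normalization proceeds by induction on the homogeneity level $\ell$.

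At each level $\ell$, I would expand the tangency identity \eqref{e:tangencyweighted} in the weighted grading, extract the weight-$\ell$ piece, and separate its real and imaginary parts. The resulting equations couple the coefficients of $\psi_\ell$ with those resonant coefficients $c_\bullet$ in $\gamma$ and $d_\bullet$ in $\delta$ that contribute at weight $\ell$. I would then use the homogeneous data $\varphi$ (degree $\mu$, with $\varphi(z,0)=0$) and $\nu$ (degree $k$) of \autoref{pro:deletion1} --- which together provide an infinite family of independent parameters indexed by the resonances $n_1(\ell), n_2(\ell)\in\mathbb{N}$ --- to simultaneously kill the harmonic terms of $\psi_\ell$, thereby enforcing $\psi(z,0,u)=0$ and, by the reality of $\psi$, $\bar\psi(0,\bar z,u)=0$, while also removing the corresponding resonant coefficients $c_\bullet,d_\bullet$.

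The sole exception is the coefficient $d_k$ in $\delta$, which contributes $d_k w^{2k+1}\dop{w}$ to $X$. Here the relevant submatrix coming from \eqref{e:matrix} degenerates: since $n_2(k)=0$ corresponds to a pure $w^k$ term, the column coupling $d_k$ to the coordinate parameters has a rank drop. Direct inspection of the tangency equation at this level further shows that $d_k$ is necessarily real and cannot be removed by any admissible change of coordinates. Setting $r:=d_k\in\mathbb{R}$, all remaining resonant coefficients vanish, giving $\gamma\equiv 1$ and $\delta\equiv rw^k$, which is precisely the form of $X$ claimed in \eqref{nfgen}. The structure of the lowest-homogeneity surviving piece of $\psi$, namely $Cz^p\bar z^q u^{k-\mu(p+q)}+O(z^{p+1})$, then follows directly from the reduced tangency equation after a final linear change in $z$ aligning coordinates with the leading $z$-monomial of this piece.

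The main obstacle is the inductive linear-algebra bookkeeping: one must verify at each homogeneity level that the linearization mapping the coordinate-change parameters to the pair (harmonic components of $\psi_\ell$, resonant coefficients of $X$ at weight $\ell$) is an isomorphism, with the single rank-one drop occurring precisely at the distinguished level. This requires an explicit case analysis of which resonances $n_1(\ell), n_2(\ell)\in\mathbb{N}$ arise for the given $\mu\in\mathbb{Q}^-$, together with a careful computation of the induced action of $\varphi$ and $\nu$ on both the resonant coefficients of $X$ and the harmonic part of $\psi$. Once this bookkeeping is in place, the conclusion is immediate from the reduced form of the tangency equation.
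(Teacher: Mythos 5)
Your proposal correctly identifies the relevant tools (\autoref{pro:deletion1}, \autoref{lem:startingterm}) and the correct endpoint (the single surviving resonance $r=d_k\in\R$, forced real by the tangency equation), but the central mechanism is wrong. You propose to remove the resonant coefficients $c_\bullet,d_\bullet$ of $X$ by the coordinate changes of \autoref{pro:deletion1}, verifying that the linearized map from the parameters $(\varphi,\nu)$ to the pair (harmonic part of $\psi$, resonant coefficients of $X$) is an isomorphism. No such isomorphism can exist: with $\mu=-p/q$, the monomials of $\varphi$ are in bijection with the type-$1$ resonances $c_{jp}z^{1+jq}w^{jp}$ ($j\geq 1$) and the monomials of $\nu$ are in bijection with the harmonic monomials $z^{jq}u^{k+jp}$ of $\tilde\psi_k$ ($j\geq 0$), so the source already exactly matches the harmonic part of $\tilde\psi_k$ plus the type-$1$ resonances, and the type-$2$ resonances $d_\ell$ are left over --- the target strictly outnumbers the source. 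More fundamentally, resonant coefficients are by definition the terms that \emph{cannot} be killed by coordinate changes preserving the prenormal form; what kills them here is not coordinate freedom but the \emph{existence of the real integral manifold}. The paper's argument is: use \autoref{pro:deletion1} only to normalize $\psi$ (first $\tilde\psi_k(z,0,u)=0$, then the leading mixed term to a pure monomial $Cz^p\bar z^qu^{k-\mu(p+q)}$); then the weight-$(2k+1)$ piece of the tangency identity \eqref{e:tangencyweighted}, i.e.\ \eqref{e:reflect}, evaluated at $\bar z=0$ \emph{forces} $\delta(z,u)=ru^k$ with $r$ real, and its coefficient of $z^p$ forces $\gamma\equiv 1$. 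Your proof as written never extracts this information from the tangency equation for the resonances other than $d_k$, and so has a genuine gap.

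A secondary structural point: the level-by-level induction on $\ell$ is unnecessary and misreads the decomposition \eqref{k2k}. In the $(\mu,1)$-grading, $\gamma-1$ is weighted-homogeneous of degree $0$ and $\delta$ of degree $k$, so \emph{all} resonant data of $X$ sits at the two weighted levels $k$ and $2k$ of the vector field, and the entire identification happens at the single weighted level $2k+1$ of the tangency identity, where $\delta$, $\gamma-1$, and $\tilde\psi_k$ all enter linearly. The vanishing of the harmonic parts of the higher $\psi_\ell$ is then not obtained by further level-by-level normalization but follows from the uniqueness statement \autoref{pro:uniquehypersurface}. Finally, the last normalization of the leading term of $\tilde\psi_k$ is not achieved by a linear change in $z$ but by a weighted-degree-$\mu$ substitution $z\mapsto z\tilde\varphi(z,w)$ of the type admitted by \autoref{pro:deletion1}.
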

\begin{proof}
We compute the term of homogeneity $2 k +1$ in \eqref{e:tangencyweighted}:
\[
0 = u^{k +1} \real \left(   \frac{\delta(z,u)}{2 i} +  \frac{k }{2} \tilde \psi_{k} - \frac{1}{2} u \tilde\psi_{k;u}  - \mu  \alpha (z,u) z \tilde\psi_{k;z} \right).
\]
In other words, we have that 
\begin{equation}\label{e:reflect} -i \delta(z,u) +i \bar \delta (\bar z, u) + k \tilde\psi_{k} - u \tilde\psi_{k;u} - \mu  \alpha (z, u) z \tilde \psi_{k;z} - \mu  \bar \alpha (\bar z, u) \bar z \tilde \psi_{k;\bar z} = 0, \end{equation}
as a power series in $z, \bar z, u$. We set $\bar z = 0$ to obtain
\begin{equation}
 \label{e:reflect1} -i \delta(z,u) +i \bar \delta (0, u) + k \tilde\psi_{k} (z,0,u) - u \tilde\psi_{k;u} (z,0,u) - \mu  \alpha (z, u) z \tilde \psi_{k;z} (z,0,u) = 0.
 \end{equation} 
 This equation suggests to make the following change of coordinates: $$\Phi(z^*,w^*) = (z^*, w^*(1 + 2 i  \tilde \psi_{k} (z^*,0,w^*)).$$ Since the 
 coordinate is of 
 the form considered in \autoref{pro:deletion1}, we can apply that Proposition to 
 see that with an appropriate change
 of coordinates $$\Phi (z^* , w^*) = (z^* + f, w^* + 2i \tilde \psi_{k} + g),$$  $X$ stays in normal form, while in the new coordinates $(z^*, w^*)$, $M$ is 
 defined by  
 \[  \imag w^* +  \psi_{k} (z^*, 0, w^*)  +  \psi_{k} (0, \bar z^*, \bar w^*) =  \psi(z^*, \bar z^*, u^*  ) + \dots .  \]
 Hence, after this change of coordinates, if we drop the $*$ from the notation again, we can assume that $M$ is 
 defined by an equation of the form $v = u \tilde \psi (z,\bar z,u)$ with $\tilde \psi_{k} (z,0,u) = \tilde \psi_{k} (0,\bar z,u) = 0$. In these 
 coordinates, we therefore have by \eqref{e:reflect1} that $\delta(z,u) = r u^{k}$ for some real number $r$. 

 So let us return to \eqref{e:reflect}. We decompose $ \tilde \psi_{k} $, which in our new coordinates does not contain any pure terms, as 
 \[ \tilde \psi_{k} (z,\bar z, u) = C z^p ( \bar z^q u^{k - \mu (p+q)} + O (\bar z^{q+1})) + O(z^{p+1}). \]

 We claim that there is a change of coordinates of the form $$ \Psi(z^* , w^*) = (z^* + F(z^*, w^*), w^*),$$
 with $F$ being homogeneous of degree $\mu $ with respect to the $(\mu ,1)$-grading, such that in the new coordinates,  $$\tilde \psi^*_{k} ({z^*},{\bar z}^*, u) = C {z^*}^p  ({{\bar z}^*})^q u^{k - \mu  (p+q)}  + O({z^*}^{p+1}) .$$ 
 Let $\mu  = \frac{m}{n} $ with $\gcd({m,n}) = 1 $, and $m<0$, $n>0$. Then we can write
 \[ \tilde \psi_{k} (z,\bar z, u) = C z^p \bar z^q u^{k -  \mu   (p+q)} (\bar{\tilde{ \varphi}} (\bar z^{-m} u^n))^q + O (z^{p+1}), \]
 where $\tilde \varphi (s)$ is a power series in $s$ with $\varphi(0) = 1$. The coordinate 
 change $(z,w) \mapsto (z \tilde \varphi(z,w), w)$
 is again of the 
 form considered 
 in \autoref{pro:deletion1}. We therefore 
 set $$\Phi (z^*, w^*) = (z^* + \varphi(z^* , w^*) + f(z^*, w^*), w^*) $$ with $f$ from that Proposition (one sees that in case $\psi = 0$, we have that $g = 0$ in \autoref{pro:deletion1}). 

 After this change of coordinates, \autoref{e:tangencyweighted} becomes
 \[  k \tilde \psi_{k} -  u \tilde\psi_{k;u}  - \mu  \alpha (z,u) z \tilde\psi_{k;z} -\mu  \bar \alpha (\bar z,u) \bar  z \tilde\psi_{k; \bar z} = 0. \]
 Examining the coefficient of $z^p$ in this equation (as a power series in $\bar z$ and $u$)
 yields $\alpha(\bar z, u) = 1$. 
\end{proof}

The other side of the coin is the fact that the
tangency equation, for which $X$  is in 
the formal normal forms \eqref{complete} or \eqref{nfgen} above, and every 
starting data $\tilde \psi_{k}$, yields a unique
formal hypersurface to which $X$ is tangent. 

\begin{proposition}
\label{pro:uniquehypersurface} Assume that 
$X$ is as in \eqref{complete} or \eqref{nfgen} (with $\mu\in\CC{*}$),
and that $\varphi (z, \bar z, u)$ satisfies 
\[\varphi \left(  t^{ \mu  } z, t^{\bar  \mu  } \bar z, t u \right) = t^{k} \varphi (z, \bar z, u),\]
\[ \varphi (z,0,u) = \varphi (0,\bar z, u) = 0,  \]
as well as 
\[ \varphi (z, \bar z, u) = C z^p (\bar z^q u^{k -  \mu  (p+q)} + O(\bar z^{q+1}) ) + O (z^{p+1}). \]
Then there exists a unique real-valued formal power series $\tilde \psi (z, \bar z, u)$ such that for $M$
defined by $v = u \tilde \psi (z,\bar z,u)$ we have $X\in \hol (M,0)$ and 
such that the decomposition 
\[ \tilde \psi (z,\bar z, u) = \sum_{j} \tilde \psi_j (z, \bar z ,u ), \quad 
\psi_j \left(  t^ \mu   z, t^{\bar  \mu  } \bar z, t u \right) = t^{j} \psi (z, \bar z, u)  \]
satisfies $\tilde\psi_{k} = \varphi$, and $\tilde\psi_{j} \neq 0 $ only if $j = \ell k$ for some positive $\ell \in \N$. 
Furthermore, $\tilde \psi $ satisfies $\tilde \psi (z,0,u) = \tilde \psi (0,\bar z,u ) = 0$, and $\tilde \psi(z,\bar z, u) = C z^p (\bar z^q u^{k -  \mu  (p+q)} + O(\bar z^{q+1}) ) + O (z^{p+1})$.
\end{proposition}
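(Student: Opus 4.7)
The plan is to construct $\tilde\psi$ inductively in the weighted grading $[z] = \mu$, $[\bar z] = \bar\mu$, $[u] = 1$, using the tangency equation as a recursive system. First I would write out the tangency equation explicitly: substituting $w = u + iu\tilde\psi$ into $\real X(v - u\tilde\psi) = 0$ and dividing throughout by the common factor $u^{k+1}$, one obtains a formal identity in $z, \bar z, u$ of the shape
\[
-\mu z (1+i\tilde\psi)^k \tilde\psi_z - \bar\mu \bar z (1-i\tilde\psi)^k \tilde\psi_{\bar z} + \imag P(\tilde\psi) - \real P(\tilde\psi)\cdot (\tilde\psi + u\tilde\psi_u) = 0,
\]
where $P(\tilde\psi) = (1+i\tilde\psi)^{k+1} + r u^k (1+i\tilde\psi)^{2k+1}$. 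A direct expansion shows that the linearization at $\tilde\psi = 0$ has the form $L = -E + k + r u^k R$, where $E = \mu z\partial_z + \bar\mu\bar z\partial_{\bar z} + u\partial_u$ is the weighted Euler operator and $R$ is a first-order differential operator of weight $0$ in our grading.

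Next, I would decompose $\tilde\psi = \sum_j \tilde\psi_j$ into weighted homogeneous pieces and project the tangency equation onto each weight level. Since $E$ acts on a weight-$j$ function by multiplication by $j$, the diagonal action of the linear operator at weight $j = \ell k$ is multiplication by $k(1-\ell)$, which is nonzero for $\ell \geq 2$. The $ru^k R$ piece shifts weights by $k$ and so couples $\tilde\psi_{\ell k}$ only to $\tilde\psi_{(\ell-1)k}$; together with the nonlinear couplings coming from the higher powers of $\tilde\psi$, this makes the equation at weight $\ell k$ a triangular linear system in $\tilde\psi_{\ell k}$ with invertible diagonal block and right-hand side depending only on $\tilde\psi_{jk}$ for $j < \ell$. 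Setting $\tilde\psi_k := \varphi$ at the base of the recursion (where the equation is automatically satisfied, as $L$ kills weight-$k$ functions), inductive inversion produces a unique formal series $\tilde\psi$.

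It remains to verify the structural claims. That only weights in $k\mathbb{N}$ occur follows because every term generated in the recursion is a combination of products of $\tilde\psi_{jk}$, powers of $u^k$, and weight-preserving differential operations, so no foreign weights are ever produced. The vanishing $\tilde\psi(z,0,u) = 0$ is obtained by restricting the tangency equation to $\bar z = 0$: the $\bar z \partial_{\bar z}$ term then drops out, and with the initial data $\varphi(z,0,u) = 0$ the restricted recursion has vanishing source at every weight level, so by the same invertibility each $\tilde\psi_{\ell k}(z, 0, u)$ must vanish; the symmetric argument gives $\tilde\psi(0, \bar z, u) = 0$. The starting-monomial description propagates by tracking the lowest $z$-order at each weight, using that neither $R$ nor nonlinear combinations of lower-weight components can reduce this order below $p$.

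I expect the main obstacle to be the inversion step of paragraph two: while the diagonal $k(1-\ell)$ is manifestly invertible for $\ell \geq 2$, one must verify carefully that the inhomogeneous right-hand side at weight $\ell k$ is a well-defined formal power series lying both in the weight-$\ell k$ piece and in the no-harmonic-terms subspace preserved by the induction, so that the unique formal solution actually belongs to the class of series described in the proposition.
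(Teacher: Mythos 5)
Your proposal is correct and follows essentially the same route as the paper: both write out the tangency equation, divide by $u^{k+1}$, decompose by weighted homogeneity, observe that the linearization acts on a weight-$j$ homogeneous piece as multiplication by $k-j$ (via the weighted Euler operator), and solve inductively starting from $\tilde\psi_k=\varphi$, with the structural claims (weights in $k\mathbb{N}$, absence of harmonic terms, leading monomial) propagating through the recursion. The only point worth making explicit is that the diagonal $k-j$ is invertible for \emph{every} weight $j>k$, not just $j=\ell k$, which is what forces $\tilde\psi_j=0$ at the foreign weights and yields uniqueness.
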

\begin{proof}
Write out the tangency equation for $X$ in 
the form of \autoref{l:deletemost}:
\[ \begin{aligned}
0 &= \real \left( \left((u+ i u\tilde \psi)^{k+1} + r (u +i u \tilde\psi)^{2k+1}\right) \left(  \frac{1}{2i}
- \frac{1}{2} (u \tilde \psi_u + \tilde \psi)  
 \right) - A z (u + i u \tilde \psi)^{k} u \tilde \psi_z    \right) \\
 &= u^{k +1} \real \left( \left((1+ i \tilde \psi)^{k+1} + r u^{k} (1 +i  \tilde\psi)^{2k+1}\right) \left(  \frac{1}{2i}
- \frac{1}{2} (u \tilde \psi_u + \tilde \psi)  
 \right) - A z (1 + i  \tilde \psi)^{k}  \tilde \psi_z    \right) \\
\end{aligned} \]
This means that $\real X$ is tangent to the hypersurface defined by $\imag w = \real w \tilde \psi (z,\bar z, \real w)$ 
if and only if 
\[ \real \left( \left((1+ i \tilde \psi)^{k+1} + r u^{k} (1 +i  \tilde\psi)^{2k+1}\right) \left(  \frac{1}{2i}
- \frac{1}{2} (u \tilde \psi_u + \tilde \psi)  
 \right) - A z (1 + i  \tilde \psi)^{k}  \tilde \psi_z    \right) = 0. \]
We can read this equation as an equation inductively
determining $\tilde \psi_{k + \ell}$ for $\ell>0$
from $\tilde \psi_{k}$: If we examine 
the term of homogeneity $ k + \ell  $ in it, we 
can rewrite this term as 
\[  \begin{aligned}
P_\ell (u^{k}, \tilde{\psi}_{k+j},z \tilde{\psi}_{k+j; z}, \bar z \tilde{\psi}_{k+j ; \bar z} , u \tilde{\psi}_{k+j}\colon j<\ell ) 
+ k \tilde{\psi}_{k+\ell} -   (  u \tilde{\psi}_{k +\ell;u} + A z \tilde{\psi}_{k+\ell;z}  + \bar  A  \bar z \tilde{\psi}_{k+\ell;\bar z}) \\ 
= P_\ell (u^{k}, \tilde{\psi}_{k+j},z \tilde{\psi}_{k+j; z}, \bar z \tilde{\psi}_{k+j ; \bar z} , u \tilde{\psi}_{k+j}\colon j<\ell ) 
 -\ell  \tilde{\psi}_{k+\ell},
\end{aligned}   \]
where $P_\ell $ is a polynomial in its arguments such that
 $P_\ell (u^{k}, \tilde{\psi}_{k+j},z \tilde{\psi}_{k+j; z}, \bar z \tilde{\psi}_{k+j ; \bar z} , u \tilde{\psi}_{k+j}\colon j<\ell )$ 
 is homogeneous of degree $k + \ell$. It follows by induction that we can solve for $\tilde \psi_{k + j}$ and that 
 furthermore,  $P_{j} = 0  $ unless $k = \ell j$ for some $\ell \geq 1$. 

 The last two assertions of the proposition follow easily from this inductive procedure. 
\end{proof}

Now \autoref{rational}, \autoref{l:deletemost}, and \autoref{pro:uniquehypersurface} read together imply 
\begin{proposition}\label{nfgeneric}
Every vector field $X$ tangent to an infinite type hypersurface of generic type can be brought by a formal invertible  transformation to either the normal form \eqref{complete} for $\mu\not\in\mathbb Q^-$ or the normal form \eqref{nfgen} for $\mu\in\mathbb Q^-$.  All the parameters present in the normal form \eqref{nfgen} actually occur, i.e. for each collection of $k\geq 0,\mu\in\CC{*},r\in\RR{}$ there exists a real-analytic Levi-nonflat hypersurface with $X\in\mathfrak{aut}\,(M,0)$. 
\end{proposition}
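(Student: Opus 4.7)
The statement has two halves: the existence of the normal form and the realization of all parameter values. The first is a direct combination of the preceding propositions.

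Starting from $X$ in the prenormal form \eqref{e:normal2} granted by \autoref{pro:normal2}, I would dichotomize on whether $\mu \in \mathbb{Q}^-$. If $\mu \notin \mathbb{Q}^-$, then \autoref{rational} shows that none of the exponents $n_1(\ell)$, $n_2(\ell)$ is a positive integer, so all sums in \eqref{e:normal2} are empty and we directly obtain \eqref{complete}. If $\mu \in \mathbb{Q}^-$, then \autoref{l:deletemost} together with the coordinate changes provided by \autoref{pro:deletion1} eliminates every resonant term of $\alpha$ and every resonant term of $\delta$ except the single monomial $r w^{2k+1}$, producing \eqref{nfgen}.

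For realization, fix $k \geq 0$, $\mu \in \mathbb{Q}^-$ written as $\mu = -m/n$ in lowest terms (the case $\mu \notin \mathbb{Q}^-$ is analogous and only easier), and $r \in \RR{}$. Take $X$ in the form \eqref{nfgen}. The plan is to apply \autoref{pro:uniquehypersurface} with an admissible initial datum $\varphi$ and verify that the resulting formal series $\tilde \psi = \sum_\ell \tilde \psi_{\ell k}$ converges. A natural choice is the polynomial $\varphi(z, \bar z, u) = z^p \bar z^q u^{k + m(p+q)/n}$ with $p, q \geq 1$ and $n \mid (p+q)$, which simultaneously satisfies the homogeneity condition and the absence of pure terms required by the proposition. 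The resulting formal $M = \{v = u \tilde \psi(z, \bar z, u)\}$ is by construction a Levi-nonflat integral $3$-fold for $X$ at the formal level.

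The main obstacle is the convergence of $\tilde \psi$. The inductive relation in \autoref{pro:uniquehypersurface} has the schematic form $\tilde \psi_{k+\ell} = -P_\ell(\tilde \psi_{k+j} \colon j < \ell)/\ell$, so there is no small-divisor pathology: the denominator $\ell$ grows linearly. I would set up a majorant estimate of Cauchy type: starting from the polynomial initial datum $\varphi$ of bounded bidegree in $(z, \bar z)$, one shows inductively that each $\tilde \psi_{\ell k}$ has bounded bidegree and coefficients dominated by a geometric series, yielding analyticity of $\tilde \psi$ in a neighborhood of the origin. Levi-nonflatness then follows at once from the nonvanishing of the chosen leading term $\varphi$.
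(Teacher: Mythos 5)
Your treatment of the normal-form half is exactly the paper's: the paper derives Proposition~\ref{nfgeneric} by reading together \autoref{rational} (the case $\mu\not\in\mathbb Q^-$, giving \eqref{complete}), \autoref{l:deletemost} (the case $\mu\in\mathbb Q^-$, giving \eqref{nfgen} via the coordinate changes of \autoref{pro:deletion1}), and \autoref{pro:uniquehypersurface} for the realization. So the first two paragraphs of your proposal are correct and coincide with the intended argument.

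For the realization half you attempt more than the paper does: the paper simply invokes \autoref{pro:uniquehypersurface}, which produces a \emph{formal} hypersurface tangent to the normal-form field (convergence being deferred), whereas you try to establish analyticity of $\tilde\psi$ directly. Your choice of initial datum $\varphi=z^p\bar z^q u^{k-\mu(p+q)}$ with $n\mid(p+q)$ is admissible and does give Levi-nonflatness. However, the key step of your convergence sketch is wrong as stated: the inductive relation determining $\tilde\psi_{k+\ell}$ is \emph{nonlinear} in the lower-order terms ($P_\ell$ contains products of the $\tilde\psi_{k+j}$, $z\tilde\psi_{k+j;z}$, etc.), so the bidegree of $\tilde\psi_{\ell k}$ in $(z,\bar z)$ grows roughly like $\ell(p+q)$ and is certainly not bounded. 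A majorant argument is still plausible --- there are no small divisors since the linear coefficient is $-\ell$, and the $u$-order of $\tilde\psi_{\ell k}$ grows with $\ell$ --- but it must track the growing bidegree, e.g.\ by majorizing the full series in the three variables $(z,\bar z,u)$ simultaneously rather than homogeneity by homogeneity with fixed bidegree. Alternatively, an analytic realization can be produced more cheaply in the spirit of the paper's proof of \autoref{nfspec1}, by an Ansatz reducing the tangency equation to a Briott--Bouquet/Cauchy--Kowalewski problem. As written, your convergence step is a genuine gap, though the surrounding strategy is sound and goes beyond what the paper records.
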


It remains to deal with the convergence of the normal form \eqref{nfgen} for $\mu\in\mathbb Q^-$ (for $k=0$, one should employ \eqref{complete} instead of \eqref{nfgen}). To avoid case distinction, we assume for the rest of the section that $k>0$, as for $k=0$ the argument is in fact simpler.  Let us write
$$\mu=-\frac{p}{q},\,\,p,q\in\mathbb N,$$
so that after a real scaling we have
$$X=w^k\left[-(pz+...)\dop{z}+(qw+...)\dop{w}\right]$$ for the original vector field, and 
\beq\label{nfpq}
X_N=-pzw^k\dop{z}+(qw^{k+1}+rw^{2k+1})\dop{w}
\eeq
for the normal form. We now prove the following statement, which is crucial for the convergence property.
\begin{proposition}\label{symmetries}
Assume that a generic type vector field $X$ with $\mu\in\mathbb Q^-$ can be formally brought to a normal form $X_N$ of the kind \eqref{nfpq}. Then any other normal form \eqref{e:normal2} of $X$ also equals to $X_N$. 
\end{proposition}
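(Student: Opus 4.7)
My plan is to show that any formal biholomorphism $\Psi$ taking $X_N$ in the form \eqref{nfpq} to another prenormal form $X_N'$ coincides with a formal symmetry of $X_N$, so that $X_N' = \Psi_* X_N = X_N$. Setting $\Psi = \Phi_2 \circ \Phi_1^{-1}$ for the two normalizations $\Phi_1, \Phi_2$ reduces the statement to this assertion. By \autoref{pro:deletion1}, such a $\Psi$ is uniquely determined by two leading pieces of data: a weighted-homogeneous $\varphi$ of degree $\mu$ with $\varphi(z,0)=0$, and a weighted-homogeneous $\nu$ of degree $k$ (with $\nu(0,w)=0$ if $k=1$). The core of the plan is to show (a) that each admissible $\varphi$ extends to a genuine formal symmetry of $X_N$, and (b) that $\nu$ is forced to vanish.

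For (a), I would exhibit explicit formal symmetries. Writing $\mu=-p/q$, for each $m \geq 1$ the formal vector field
\[ Y_m := z^{1+mq}\,w^{mp}\,(1+rw^k)^{-mp/k}\,\partial_z \]
satisfies $[Y_m, X_N]=0$, as one checks by a direct computation: the factor $(1+rw^k)^{-mp/k}$, expanded as a generalized binomial series in $w$, is precisely the unique formal correction to the leading monomial $z^{1+mq}w^{mp}$ that solves the recursion $(\mu(1-a)-b)c_{ab} = r(b-k)c_{a,b-k}$ arising from the equation $[Y,X_N]=0$ for $Y=\sum c_{ab}z^aw^b\,\partial_z$. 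Consequently the time-one flow of $c_m Y_m$ is a formal symmetry of $X_N$ with leading part $c_m\,z^{1+mq}w^{mp}$, and by the uniqueness part of \autoref{pro:deletion1}, every $\Psi$ with $\nu=0$ is such a composition and hence preserves $X_N$.

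For (b), I would use a weight-counting obstruction. Computing $[\nu\partial_w, X_N]$ and applying the Euler relation $\mu z\,\nu_z + w\,\nu_w = k\nu$ yields a $\partial_w$-contribution of the form $w^k\,\nu$ at weighted vector-field degree $2k-1$. But the prenormal form \eqref{e:normal2} permits $\partial_w$-monomials only at degrees $k$ (the leading $w^{k+1}$) and $2k$ (the resonant $z^{n_2(\ell)}w^{\ell+k+1}$), so weight $2k-1$ is strictly forbidden. This offending term cannot be cancelled by higher-order corrections: any $g$-correction has function weight strictly greater than $k$, so $[g\partial_w, X_N]$ lies at vector-field weights strictly greater than $2k-1$; and any $f$-correction feeds into $\partial_w$ only through the term $f(X_N^{(w)})_z\,\partial_w$, which vanishes since $X_N^{(w)}=w^{k+1}+rw^{2k+1}$ is independent of $z$. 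Hence $\nu=0$ is forced, and combining with (a), $\Psi$ preserves $X_N$.

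The main obstacle will be making the weight-counting in (b) fully rigorous throughout the iterative construction of $\Psi$ in \autoref{pro:deletion1}: one must verify that at every intermediate stage $(h_p)_*X_N$ the $\partial_w$-component remains free of $z$-dependence modulo the allowed prenormal form terms, so that the obstruction argument persists after every normalization step. This is precisely where the specific rigidity of the form \eqref{nfpq} (as opposed to a generic prenormal form with nontrivial $\delta$) becomes essential.
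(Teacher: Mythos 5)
Your overall strategy --- reducing the statement to showing that every formal transformation preserving the prenormal form class is a symmetry of $X_N$, and then exhibiting explicit elements of the formal centralizer --- is the same as the paper's, and your part (a) is essentially correct: the fields $Y_m$ (up to the harmless constant, $(1+\tfrac{r}{q}w^k)^{-mp/k}$ rather than $(1+rw^k)^{-mp/k}$) do commute with $X_N$, and their flows account for all of the resonant freedom $zA(z^qw^p)$ in the $z$-component. This reproduces the $D$-part of the paper's formula \eqref{findf1}.

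The gap is in part (b), and it is fatal: you have obstructed the \emph{wrong} $w$-direction freedom. The genuinely free (resonant) monomials in the $w$-component of a normalizing transformation are $z^{mq}w^{mp+k+1}$, $m\geq 0$; they come from the vanishing of the lower-right entry $An-(k-\ell)B$ of \eqref{e:matrix} at $\ell=k+mp$, $n=n_2(\ell)=mq$, and they sit at weighted degree $k+1$, not $k$. (The statement of \autoref{pro:deletion1} has an off-by-one here; its own application in \autoref{l:deletemost}, where the $w$-perturbation is $2i\,w^*\tilde\psi_{k}$ of degree $k+1$, confirms the correct degree.) A perturbation $g\partial_w$ with $g$ of weighted degree $k+1$ is a vector field of degree $k$, so $[g\partial_w,X_N]$ lands at degree $2k$ --- exactly where the resonant $Z_{2k}$-terms of \eqref{k2k} live --- and your $2k-1$ degree-count does not touch it. These parameters are genuinely free, so to conclude you must show that varying them does not change the normal form, i.e.\ you must produce the centralizer elements of $X_N$ with \emph{nontrivial} $\partial_w$-component. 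Such elements exist but are coupled: solving $[L,X_N]=0$ for $L=f\partial_z+wg\partial_w$ forces $wg=C(z^qw^p)w^{k+1}(1+\dots)$ together with a specific accompanying $f=z\tilde C(z^qw^p)w^{k}(1+\dots)$ (in addition to the free $zD(z^qw^p)$ that your $Y_m$ realize); this is the content of \eqref{findg} and \eqref{findf1}. This second infinite-dimensional family is entirely absent from your argument, so the comparison of free parameters between symmetries and normalizing transformations --- which is the crux of the proof --- does not close.
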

\begin{proof}
Let us start with computing sufficiently many {\em formal symmetries} of $X_N$, i.e. formal transformations of $(\CC{2},0)$ mapping $X_N$ into itself. We note that any (formal) vector field $L,\,L(0)=0,$ commuting with $X_N$ generates a flow of (formal) symmetries of $X_N$ (e.g., \cite{ilyashenko}). We now take any formal vector field 
$$L=f(z,w)\dop{z}+wg(z,w)\dop{w}$$
vanishing at the origin, and work out the commutation condition $[X_N,L]=0$. It is straightforward to compute that this results in the following system of PDEs for $f,g$:
\beq \label{commute}
\begin{aligned}
&-pzf_z+(qw+rw^{k+1})f_w+pf+kpzg=0,\\
&-pzg_z+(qw+rw^{k+1})g_w-kg(q+2rw^k)=0.
\end{aligned}
\eeq
As a well known fact (e.g. \cite{ilyashenko}), there exists a local biholomorphic transformation 
\beq\label{dim1}
w\mapsto w+O(w^{k+1})
\eeq
of $(\CC{},0)$, mapping the $1$-dimensional polynomial vector field $(qw+rw^{k+1})\dop{w}$ into its normal form $qw\dop{w}$. After applying it, the system \eqref{commute} transforms into
\beq \label{commute1}
\begin{aligned}
&-pzf_z+qwf_w+pf+kzg=0,\\
&-pzg_z+qwg_w-qkg\left(1+...)\right)=0,
\end{aligned}
\eeq 
where dots stand for terms $w^j,\,j\geq k$.
We then perform the blow-up transformation
\beq\label{blowup}
z^*=z^qw^p,\quad w^*=w,
\eeq
which maps the linear vector field 
$$L_0:=-pz\dop{z}+qw\dop{w}$$
into the monomial linear vector field $qw\dop{w}$. Then the second PDE in \eqref{commute1} becomes independent of $z$, precisely it becomes
$$wg_w=k(1+...)g$$
(dots again stand for an analytic expression in $w$ vanishing at $0$). Solving the latter PDE (as a linear ODE for $g(z,w)$ for each fixed $z$) and performing the inverse substitution w.r.t. \eqref{blowup}, we compute 
\beq\label{findg}
g(z,w)=C(z^qw^p)w^{k}(1+...),
\eeq
where dots stand for a (fixed) analytic power series in $w$ vanishing at $0$, and $C(z^qw^p)$ is an arbitrary power series in one variable. Substituting the outcome for $g$ into the first equation in \eqref{commute1}, we get:
\beq\label{findf}
L_0(f)+pf=-kzC(z^qw^p)w^{k}(1+...).
\eeq 
Note that each monomials $z^aw^b$ is an eigenvector of the operator 
$$f\mapsto L_0(f)+fp$$
(considered on the linear space of all formal power series in $z,w$) with the eignvalue $qb-p(a-1)$. It follows from here that all the monimials $z^aw^b$ with $qb-p(a-1)\neq 0$ (we call them {\em non-resonant}, while the other {\em resonant}) lie in the image of the operator. It is easy to see (from $k>0$ and the fact that dots stand for terms in $w$ only) that the right hand side in \eqref{findf} does {\em not} contain the latter monomials, thus for each $g$, as in \eqref{findg}, there exists a unique solution of \eqref{findf}, which has the same representation as  the right hand side of \eqref{findf} has (with a different series $C(\cdot)$). A solution of the linear PDE \eqref{findf} is defined uniquely up to a series consisting of resonant monomials only. Such a series clearly has the form $zD(z^qw^p)$ for some formal power series $D$ in one variable. We finally conclude that 
\beq\label{findf1}
f(z,w)=z\Bigl(D(z^qw^p)+\tilde C(z^qw^p)w^k(1+...)\Bigr)
\eeq 
(here $\tilde C,D$ are power series in one variable; $\tilde C$ is uniquely determined by $C$, while $D$ is arbitrary).
It remains to deal with the inverse substitution for \eqref{dim1}. However, it is easy to see that the representations \eqref{findg},\eqref{findf1} remain valid, just with a different $C,\tilde C,D$. 

We now compare the outcome of our calculation to the normalization procedure in the proof of \autoref{pro:normal2}. Over there, in our notations, a normalizing transformation 
$$z\mapsto z+F(z,w),\quad w\mapsto w+G(z,w)$$ is determined uniquely modulo resonant monomials in $F,G$, which have respectively the form 
\beq\label{resterms}
zA(z^qw^p) \,\,\,\mbox{or}\,\,\,w^kB(z^qw^p)
\eeq for arbitrary power series $A,B$ in one variable vanishing at $0$. However, flows of formal vector fields with coefficients $f,g$ satisfying \eqref{findg},\eqref{findf1} (all of which must preserve the normal form $X_N$ in view of the above) have the same number of free parameters in each finite jet as normalizing transformations of $X_N$ do. This can be seen, for example, from the exponential representation
\beq\label{expon}
H^t=I+tX+\sum_{j=2}^\infty \frac{1}{j!}t^jX^j
\eeq
of the formal flow (for employing \eqref{expon}, one shall understand a formal vector field as a derivation of the algebra of formal power series, and $X^j$ as a composition of derivations; see, e.g., \cite{ilyashenko}).
Thus, the latter flows generate {\em all} the normalizing transformations of $X_N$, and this already  implies the assertion of the proposition for $k>0$.

For $k=0$, the proposition is proved identically, with just employing the normal form \eqref{complete} instead of \eqref{nfgen} and without the need to perform the substitution \eqref{dim1}.   
 
\end{proof}

We are in the position now to prove the convergence theorem. 
 
\begin{theorem}\label{converge}
Let $X$ be a holomorphic vector field near the origin of a generic type and $\mu=\beta_k/\alpha_k'(0)=B/A$ as above. Assume that $\re X$ is tangent to a real-analytic Levi-nonflat infinite type hypersurface $M$ through $0$. Then there exists a biholomorphic transformation bringing it to the normal form \eqref{complete} for $\mu\not\in\mathbb Q^-$ or the normal form \eqref{nfgen} for $\mu\in\mathbb Q^-$.  
\end{theorem}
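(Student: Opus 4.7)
The plan for this convergence theorem is as follows. The case $\mu \not\in \mathbb{Q}^-$ is immediate from \autoref{rational}: the relevant small divisors are separated from zero, so the classical convergence results of Poincar\'e--Dulac and Lombardi--Stolovitch directly produce a biholomorphic normalizing transformation onto the complete normal form \eqref{complete}. The entire difficulty therefore lies in the resonant case $\mu = -p/q \in \mathbb{Q}^-$, where the formal normalization constructed in \autoref{pro:normal2} could a priori be divergent.

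The approach for the resonant case is to leverage the real-analyticity of the invariant hypersurface $M$ itself. First, I would construct a convergent biholomorphic transformation $\Psi$ bringing $M$ into the CR normal form singled out in \autoref{l:deletemost}, namely $v = u\tilde\psi(z,\bar z,u)$ with $\tilde\psi(z,0,u) = \tilde\psi(0,\bar z,u) = 0$ and weighted-leading term $\tilde\psi_k = C z^p \bar z^q u^{k-\mu(p+q)} + O(z^{p+1})$, and with $\tilde\psi$ supported on positive multiples of $k$. Such a convergent CR-normalization can be built by extending the techniques of \cite{KL} from the ruled case: one solves the Chern--Moser-style cohomological equations arising from the tangency identity \eqref{e:tangencyweighted} in convergent power series, since the only genuine obstructions are precisely the finitely many resonant monomials identified in \autoref{l:deletemost}. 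Under $\Psi$, the vector field $\Psi_* X$ becomes convergent and remains real-tangent to $M_N := \Psi(M)$.

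Next, applying \autoref{nfgeneric} to $\Psi_*X$ yields a formal map $\Phi$ with $\Phi_*\Psi_*X = X_N$ for some $X_N$ of the form \eqref{nfgen}. Since $X_N$ is tangent to $\Phi(M_N)$ and \autoref{pro:uniquehypersurface} uniquely determines the formal invariant hypersurface from its starting data $\tilde\psi_k$, we must have $\Phi(M_N) = M_N$ as formal hypersurfaces. Thus $\Phi$ is a formal self-equivalence of the convergent hypersurface $M_N$ intertwining the convergent field $\Psi_*X$ with $X_N$. By \autoref{symmetries} the normal form $X_N$ is rigid, and the only remaining ambiguity in $\Phi$ is composition with a formal symmetry of $X_N$, parametrized as in \eqref{findg}--\eqref{findf1} by arbitrary one-variable formal series $C(\cdot),\tilde C(\cdot),D(\cdot)$.

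The technical heart of the argument, and the main obstacle, is to show that requiring $\Phi$ to preserve the convergent hypersurface $M_N$ pins down these free series uniquely and, crucially, forces them to be analytic. Substituting the symmetry ansatz into the tangency equation for $M_N$ reduces to a recursive linear system for $C, \tilde C, D$ whose coefficients are convergent; majorant-type estimates in the spirit of \cite{KL} should then yield analyticity of the solution. The subtle point is precisely that the infinite-dimensional freedom in the purely dynamical formal symmetry group of $X_N$ must be entirely absorbed by the analytic constraint coming from the tangency to $M$ -- without this collapse of freedom, convergence would fail. Once this step is carried out, $\Phi$ is biholomorphic and $\Phi\circ\Psi$ is the desired convergent normalizing transformation of $X$ onto $X_N$.
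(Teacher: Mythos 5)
Your reduction of the case $\mu\notin\mathbb Q^-$ to \autoref{rational} matches the paper, but your treatment of the resonant case $\mu=-p/q$ has two genuine gaps, and it also misses the mechanism that actually makes the paper's proof work. First, your opening step --- a \emph{convergent} biholomorphism $\Psi$ bringing $M$ itself into the CR normal form of \autoref{l:deletemost} --- is asserted with only an appeal to ``extending the techniques of \cite{KL}''. Nothing in the paper establishes a convergent CR normalization of the hypersurface, and this is not a routine Chern--Moser-type step: for infinite type hypersurfaces convergence of normalizations is exactly the delicate issue (indeed the paper's Remark~2 records that in the exceptional case {\bf 14} every normalization is divergent). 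Second, the step you yourself identify as ``the technical heart'' --- that tangency to $M_N$ pins down the free one-variable series $C,\tilde C,D$ in the formal centralizer and forces them to be analytic --- is left entirely unproven, with only a vague reference to majorant estimates. As written, the proposal assumes the conclusion at its two most difficult points.

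The paper's actual argument is structurally different and avoids both issues: the analyticity of $M$ is used \emph{only} to establish (via \autoref{l:deletemost} and \autoref{nfgeneric}) that the formal normal form is the \emph{polynomial} field $X_N$ of \eqref{nfpq}; after that, $M$ plays no further role. One then writes the conjugacy equation for the \emph{inverse} $H_0=(z+F,w+wG)$ of the normalizing map, i.e.\ the condition that $H_0$ pushes the polynomial $X_N$ forward to the analytic $X$. This yields the singular first-order system \eqref{thePDEs}--\eqref{PDEs1} with analytic right-hand sides, whose homological divisors $-p\alpha+q\beta+p$ and $-p\alpha+q\beta-kq$ are \emph{nonzero integers} on non-resonant monomials, hence bounded below by $1$ --- there is no small-divisor problem at all. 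The resonant coefficients are disposed of by \autoref{symmetries}: the formal centralizer of $X_N$ is large enough to choose the normalization so that all resonant monomials of $H_0$ vanish, see \eqref{nores}. A standard Cauchy majorant comparison with the implicit-function-theorem solution of \eqref{implicit} then gives convergence. If you want to salvage your outline, the key realization is that you do not need to convergently normalize $M$ or to extract analyticity from the tangency equation; you need only (i) the polynomiality of $X_N$, (ii) the integrality of the non-resonant divisors, and (iii) the freedom, guaranteed by \autoref{symmetries}, to kill the resonant part of the normalizing map.
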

\begin{proof} As discussed in \autoref{rational}, for $\mu\not\in\mathbb Q^-$ the assertion  of the theorem follows from the Poincar\'e-Dulac or Lombardi-Stolovitch theory (actually, an independent proof can be obtained by repeating the proof below for the situation $\mu\in\mathbb Q^-$, but we don't need these extra considerations and leave up to the interested readers). Hence, we deal with the case $\mu\in\mathbb Q^-$ and use the normal form \eqref{nfgen} (or \eqref{complete} for $k=0$, but we again do not consider this case to avoid case distinction, since the proof is then completely analogous). In view of \autoref{nfgeneric}, there exists a formal transformation
$$z\mapsto z+F(z,w),\quad w\mapsto w+wG(z,w)$$
of $X$ to a normal form $X_N$, as in \eqref{nfgen}. Furtheremore, in view of \autoref{symmetries}, this transformation can be chosen in such a way that all the resonant monomials \eqref{resterms} in $F,G$ {\em vanish for its inverse}. We fix the latter transformation and denote {\em its inverse} by $$H_0=(z+F,w+wG).$$ 

We now write down the fact that $H_0$ maps $X_N$ into $X$. Then a direct computation gives the following system of PDEs:
$$pz(1+F_z)-(qw+rw^{k+1})F_w=p(z+F)(1+G)+...,$$
$$pzG_z-(q+rw^{k})(1+G+wG_w)=-\bigl((1+g)^{k+1}+rw^k(1+g)^{2k+1}\bigl)+...,$$
where dots stand for an analytic expression in $z,w,F,G$, vanishing at the origin together with its differential. This immediately gives:
\beq\label{thePDEs}
\begin{aligned}
&-pzF_z+(qw+rw^{k+1})F_w+pF=...,\\
&-pzG_z+(qw+rw^{k+1})G_w-kqG=....
\end{aligned}
\eeq
Here dots, again, stand for an analytic expression in $z,w,F,G$, vanishing at the origin together with its differential. We shall now proceed similarly to the proof of \autoref{symmetries} and perform the local biholomorphic transformation \eqref{dim1}, mapping the $1$-dimensional polynomial vector field $(qw+rw^{k+1})\dop{w}$ into its normal form $qw\dop{w}$. Then, in the new coordinates, \eqref{thePDEs} reads as
\beq\label{PDEs1}
\begin{aligned}
&-pzF_z+qwF_w+pF=A(z,w,F,G),\\
&-pzG_z+qwG_w-kqG=B(z,w,F,G).
\end{aligned}
\eeq
Here $A,B$ are analytic expression in $z,w,F,G$, vanishing at the origin together with their differential.

We shall now prove the convergence of the transformation $H_0$, based on the system of PDEs \eqref{PDEs1}. 
We expand
$$F=\sum_{\alpha+\beta\geq 2}F_{\alpha\beta}z^\alpha w^\beta,\quad G=\sum_{\alpha+\beta\geq 2}G_{\alpha\beta}z^\alpha w^\beta$$ and substitute it into \eqref{PDEs1}. Then, fixing the order $m:=\alpha+\beta$, we obtain equations of the kind:
\beq\label{homol}
(-p\alpha+q\beta+p)F_{\alpha\beta}=P_{\alpha\beta},\quad
(-p\alpha+q\beta-kq) G_{\alpha\beta}=Q_{\alpha\beta}
\eeq
where $P_{\alpha\beta},Q_{\alpha\beta}$ are {\em universal} polynomials in (i) the Taylor coefficients $F_{\alpha\beta},G_{\alpha\beta}$ with $\alpha+\beta<m$; (ii) Taylor coefficients of $A,B$ at the origin  of order $\leq m$. (Note that the coefficients of the polynomials $P_{\alpha\beta},Q_{\alpha\beta}$ are {\em nonnegative}). In particular, on the formal level, we see that we can uniquely solve \eqref{homol} for the Taylor coefficients of $F,G$, besides the coefficients with $$\mbox{either}\,\,\,-p\alpha+q\beta+p=0,\,\,\, \mbox{or}\,\,\, -p\alpha+q\beta-kq=0$$ (precisely corresponding to the resonant terms in $F,G$).  Recall that, by the definition of $H_0$, the latter resonant terms are already determined: 
\beq\label{nores}
F_{\alpha,\beta} =0,\,\,-p\alpha+q\beta+p=0;\quad   G_{\alpha,\beta}=0,\,\, -p\alpha+q\beta-kq=0.
\eeq 
Let us consider now a subseries in the series of equations \eqref{homol} obtained by "ingoring" all the resonant equations. Let us denote it by $(*)$.  We then use the {\em Cauchy majorants method.} Namely, we replace the system of equations $(*)$  by another system $(**)$ obtained from $(*)$ as follows:

\smallskip

 (i) we replace the coefficients $-p\alpha+q\beta+p,\,-p\alpha+q\beta-kq$  in the left hand side of $(*)$ (which are nonzero integers and thus have absolute value $\geq 1$) by the constant coefficient $1$;

  \smallskip
  
  (iii)  we replace the Taylor coefficients of $A,B$ by their absolute values, denoting the new series respectively by $|A|,|B|$. 
  
  \smallskip

We then solve the new system of equations uniquely, and obtain solutions 
$$\bigl\{\tilde F_{\alpha\beta}\bigr\},\,\, -p\alpha+q\beta+p\neq 0;\
\quad \bigl\{\tilde G_{\alpha\beta}\bigr\},\,\, -p\alpha+q\beta-kq\neq 0$$
(the remaining "resonant" Taylor coefficients we put simply zero).
 It is now straightforward to prove, by induction in $m=\alpha+\beta$ that: 

\smallskip

(I) all the solutions $\tilde F_{\alpha\beta},\tilde G_{\alpha\beta}$ of $(**)$ are nonnegative (this follows by induction from the fact that the right hand sides of $(**)$ consist of polynomials in the coefficients $\tilde F_{\alpha\beta},\tilde G_{\alpha\beta}$ with $\alpha+\beta<m$, which are already proved to be nonnegative); 

\smallskip 

(II) we have 
\beq\label{ineq}
|F_{\alpha\beta}|\leq \tilde F_{\alpha\beta}, \quad |G_{\alpha\beta}|\leq \tilde G_{\alpha\beta}
\eeq
(this follows by induction from the construction of the system $(**)$).  

\smallskip

Finally, we consider the series of coefficients $F^*_{\alpha\beta},\tilde G^*_{\alpha\beta}$, obtained by considering the functional equation
\beq\label{implicit}
F^*=|A|(z,w,F^*,G^*),\quad G^*=|B|(z,w,F^*,G^*),
\eeq
where $|A|,|B|$ are as above, collecting terms with $z^\alpha w^\beta$, and solving for the Taylor coefficients of $F^*,G^*$. We again easily prove by induction that, first, all the Taylor coefficients of $F^*,G^*$ are nonnegative, and second, we have:
\beq\label{ineq1}
\tilde F_{\alpha\beta}\leq F^*_{\alpha\beta}, \quad \tilde G_{\alpha\beta}\leq G^*_{\alpha\beta}.
\eeq 
Now $(F^*,G^*)$ is the unique holomorphic solution of \eqref{implicit} near the origin (provided by the implicit function theorem), and the inequalities \eqref{ineq}, \eqref{ineq1} imply finally the convergence of the series
$$\sum_{\alpha+\beta\geq 2}F_{\alpha\beta}z^\alpha w^\beta,\,\, \sum_{\alpha+\beta\geq 2}G_{\alpha\beta}z^\alpha w^\beta,$$
as required.

\end{proof}

\begin{remark}
We remark that, even for linear vector fields $pz\dop{z}-qw\dop{w},\,p,q\in\mathbb N$ (which are generic with $k=0$), there exist formal divergent automorphisms. That is why for $\lambda\in\mathbb Q^-$ {\em not} every normalizing transformation is convergent (but convergent ones still exist, according to \autoref{converge}).  
\end{remark}

\section{The two exceptional cases}

We deal finally with the two exceptional cases outlined in Section 2.

\subsection{Case $\mbox{ord}\alpha_k=\infty$} The first one, we recall, is the case $$\alpha_k(z)\equiv 0$$ in \eqref{key}. We hence have $\beta_k=B\neq 0$. We, first of all, prove the following 
\begin{proposition}\label{ordinf}
Assume that in the expansion \eqref{key} one has $\alpha_k\equiv 0$. Then $X$ is formally equivalent at the origin to a  vector field of the kind 
\begin{equation}\label{c(z)}
Z_c:=(B w^{k+1}+c(z)w^{2k+1})\dop{w}
\end{equation} for some formal power series $c(z)$. 
\end{proposition}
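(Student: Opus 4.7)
The plan is to apply the inductive normalization scheme of Section 2 in the degenerate regime $\alpha_k\equiv 0$, specializing the Fuchsian system \eqref{e:secondsystem} and isolating the surviving resonances. First observe that $X_k\not\equiv 0$ combined with $\alpha_k\equiv 0$ forces $B=\beta_k\neq 0$, so $X_k = Bw^{k+1}\dop{w}$ is already of the required form, and the entire task is to kill the higher-weight pieces $X_{k+\ell}$ for $\ell\geq 1$, possibly modulo a single resonant term.

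Next I would specialize \eqref{e:secondsystem} with $A=\alpha_k'(0)=0$. Since every $O(x)$ or $O(x^2)$ coefficient on the right-hand side of that system is built out of $\alpha_k$, these all vanish identically, and the Fuchsian system collapses to the purely algebraic pair
\begin{equation*}
\ell B\, f(x) = -\alpha_{k+\ell}(x), \qquad -B\, f(x)-(k-\ell)B\, g(x)=-\beta_{k+\ell}(x).
\end{equation*}
For every integer $\ell\geq 1$ with $\ell\neq k$, this system admits a unique formal solution: $f$ is determined from the first equation and $g$ from the second (since $k-\ell\neq 0$). The associated weight-$\ell$ change of coordinates kills both $\alpha_{k+\ell}$ and $\beta_{k+\ell}$ simultaneously without disturbing any lower-weight data. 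The critical index is $\ell=k$: the first equation still solves for $f=-\alpha_{2k}/(kB)$, so $\alpha_{2k}$ can be eliminated, but the second equation loses the $g$-term and reduces to the compatibility condition $-Bf=-\beta_{2k}$, which generically fails. Hence the weight-$k$ step removes only $\alpha_{2k}$, and the resulting $\beta_{2k}$-coefficient (updated by $-Bf=\alpha_{2k}/k$) survives as an unremovable resonant term, which I denote by $c(z)$.

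Composing these weight-by-weight normalizations and taking the formal limit --- which is well-defined in the $(z,w)$-adic topology on $\C[[z,w]]$, since each factor is the identity modulo a monomial of strictly positive weight --- produces a formal biholomorphism sending $X$ to $Z_c=(Bw^{k+1}+c(z)w^{2k+1})\dop{w}$. The argument is a clean specialization of the Poincar\'e-Dulac inductive scheme of Section 2 to the degenerate linearization $A=0$, where the Fuchsian ODE complications collapse to linear algebra; I do not anticipate a serious obstacle beyond bookkeeping, as the only essential phenomenon is the isolated unremovable resonance at $\ell=k$ whose coefficient defines the invariant $c(z)$ in the normal form.
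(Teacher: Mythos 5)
Your proposal is correct and amounts to essentially the same argument as the paper's: a degree-by-degree formal normalization graded by the $w$-weight, whose homological operator (with $\alpha_k\equiv 0$, hence $A=0$) is invertible at every step except in the $\dop{w}$-component at relative weight $\ell=k$, which is precisely where the single unremovable resonant term $c(z)w^{2k+1}\dop{w}$ is created. The paper merely packages the same computation as one transformation $(z,w)\mapsto(f(z,w),\,w+wg(z,w))$ solved coefficient-by-coefficient in $w$ via the Briott--Bouquet-type system \eqref{Fuchs} (with the $w^k$-coefficient of $g$ free and $c(z)$ determined as the obstruction at that degree), whereas you iterate the weight-homogeneous maps of Section 2 by specializing \eqref{e:secondsystem}; the two yield the identical conclusion.
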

\begin{proof}
Let us subject the vector field $X$ to a (biholomorphic) transformation
$$z\mapsto f(z,w), \quad w\mapsto w+wg(z,w),\quad g(0,0)=0,\quad f_z(0,0)=1,$$
aiming to map it into a vector field $Z_c$. Since 
$$X=O(w^{k+1})\dop{z}+(Bw^{k+1}+O(w^{k+2}))\dop{w},$$
it is not difficult to compute that the fact that $X$ is mapped into $Z_c$ reads as:
\begin{equation}\label{Fuchs}
0=O(1)f_z+(B+O(w))f_w, \quad B(1+g)^{k+1}+c(f)w^k(1+g)^{2k+1}=O(w)g_z+(B+O(w))(1+wg_w+g).
\end{equation}
The system \eqref{Fuchs} has actually the form
$$f_w=U(z,w,f,f_z),\quad wg_w=V(z,w,f,g,g_z).$$
Furtheremore, importantly, the linear part of $V$ in $g$ has the form $kg$.
We can now {\em formally} solve \eqref{Fuchs} for the Taylor coefficients of $f,g$ in $z,w$ as follows: for each fixed $j$, we compare in \eqref{Fuchs} all coefficients of the kind $z^iw^j$ and find one-by-one all $f_{z^iw^j},g_{z^iw^j}$ (assuming all the derivatives if $z^iw^l$ with $l<j$ are already known). We can do so until $j<k$ (and the unspecified $c(z)$ is not involved in this scheme). This, however, fails to be possible for $j=k$, since in this case the $z^iw^k$ coefficient of $g$ arises linearly in the right hand side. We then choose a corresponding Taylor coefficient of $g$ arbitrarily, and then uniquely choose $c_{z^i}$ in order that the $z^iw^k$ terms in the second equation in \eqref{Fuchs} agree. This determines $c(z)$. After that, we go on with the scheme for $j>k$ and solve uniquely for all the Taylor coefficients in $z^iw^j$, as required. 
\end{proof}

As a consequence of \autoref{ordinf}, we conclude that {\em the decomposition \eqref{k2k} also holds for $X$} (after a rescaling) with $\alpha(z,w)\equiv 0$ and $\delta(z,w)=c(z)w^k$ (the respective $\mu=0$). We then argue identically to the proof of \autoref{lem:startingterm} and obtain
\begin{lemma}
\label{psik} Assume that $X$ as above is an infinitesimal automorphism of a nonminimal real hypersurface through $0$
defined by $v = \psi(z,\bar z, u)$. Then $\psi$ vanishes to order $k$ in $u$. 
\end{lemma}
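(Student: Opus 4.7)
The plan is to argue exactly as in the proof of \autoref{lem:startingterm}, specialized to the present data $\mu = 0$, $\alpha \equiv 0$, and $\delta(z,w) = c(z)w^k$. After rescaling so that $B = 1$, the tangency equation $\real X(v - \psi)|_{v = \psi} = 0$ becomes
\[ 0 = \real\left[\bigl((u + i\psi)^{k+1} + c(z)(u + i\psi)^{2k+1}\bigr)\bigl(\tfrac{1}{2i} - \tfrac{1}{2}\psi_u\bigr)\right]. \]
I will decompose $\psi = \sum_{p \geq 0} \theta_p(z,\bar z,u)$ into pieces homogeneous of total degree $p$ in $(z,\bar z)$ and induct on $p$ to show that each $\theta_p$ vanishes to order at least $k$ in $u$.

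For the base case $p = 0$, set $z = \bar z = 0$; the $c$-dependent term reduces to $c(0)(u + i\theta_0)^{2k+1}$, which enters at $u$-order at least $2k+1$ and therefore does not affect the leading structure. Writing $\theta_0 = u\tilde\theta_0$ and dividing out a factor $u^{k+1}$, the leading linear equation takes the Euler-type form $k\tilde\theta_0 - u\tilde\theta_0' = (\text{terms of }u\text{-order}\geq k)$, and the base-case argument of \autoref{lem:startingterm} applies verbatim to give $\tilde\theta_0 = O(u^{k-1})$, hence $\theta_0 = O(u^k)$. For the induction step, assume $\theta_j = O(u^k)$ for $j < p$, extract the terms of total degree exactly $p$ in $(z,\bar z)$, substitute $\theta_p = u\tilde\theta_p$, and factor out $u^{k+1}$. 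Because $\mu = 0$, the $\mu z\tilde\theta_{p;z}$ and $\bar\mu\bar z\tilde\theta_{p;\bar z}$ contributions present in the generic case drop out, and what remains is the Euler equation
\[ k\tilde\theta_p - u\tilde\theta_p' = (\text{forcing}), \]
the forcing being built from nonlinear cross products of the $\theta_j$ with $j<p$ together with $c(z)$-dependent contributions carrying a prefactor $u^{2k+1}$. Since all such contributions are of $u$-order strictly greater than $k$ by the inductive hypothesis (each $\theta_j$ is $O(u^k)$, so products are $O(u^{2k})$, and the $c$-terms are intrinsically $O(u^{2k+1})$), the only possible starting term in $\tilde\theta_p$ is the kernel $u^k$ of $k - u\partial_u$, so $\tilde\theta_p = O(u^k)$ and $\theta_p = O(u^{k+1}) \subset O(u^k)$.

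The main technical obstacle is the bookkeeping that keeps the forcing of the Euler equation above $u$-order $k$ at each induction step. One must verify that every nonlinear cross term in the binomial expansion of $(u+i\psi)^{k+1}$ contributing at degree $p$ either (i) involves at least two factors $\theta_j$ with $j<p$, making it $O(u^{2k})$ and absorbable into the high-order forcing, or (ii) is linear in $\theta_p$ with a coefficient vanishing to sufficiently high order at $u = 0$ that it can be absorbed as a small perturbation of the Euler operator without shifting its eigenvalue $k$. Similarly one checks that the $c(z)(u+i\psi)^{2k+1}$ factor always enters at $u$-order $\geq 2k+1$. These checks parallel those in \autoref{lem:startingterm}, and with them in place the induction closes.
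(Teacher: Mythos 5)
Your argument is correct and is exactly what the paper does: its proof of this lemma consists of the single remark that one argues identically to the proof of \autoref{lem:startingterm}, with $\mu=0$, $\alpha\equiv 0$ and $\delta(z,w)=c(z)w^k$, which is precisely the specialization you carry out. One small imprecision: after the overall factor $u^{k+1}$ is removed, the $c$-contribution $\real\bigl(\tfrac{1}{2i}c(z)u^{k}\bigr)$ sits at $u$-order \emph{exactly} $k$ (the resonant order of the Euler operator $k-u\partial_u$), not strictly above it, so your phrase ``strictly greater than $k$'' should read ``at least $k$'' --- this does not affect the vanishing-order conclusion for $\theta_p$, it only produces the compatibility condition at the resonant order that later forces $c$ to be a real constant.
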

We then obtain
\begin{proposition}\label{main1}
In the assumptions of \autoref{psik}, there exist formal coordinates
such that $\psi$ satisfies
\[ \psi (z,0,u)  = \bar \psi(0,\bar z, u) = 0 
\text{ and } \tilde \psi_{k} (z, \bar z , u) =
C z^p (\bar z)^q u^{k - \mu (p+q)} + O (z^{p+1}).  \]
In these coordinates,  $X$ (after 
possibly rescaling by a real number) takes the form
\begin{equation}\label{e:nfspec1}
X=  w^{k+1} \dop{w} + rw^{2k+1} \dop{w}, \quad r\in\R.
\end{equation}
\end{proposition}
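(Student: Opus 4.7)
The argument mimics the proof of \autoref{l:deletemost}, adapted to the exceptional case $\mu=0$. By \autoref{ordinf} and a real rescaling (legitimate since $\beta_k=B\in\R$), we may assume $X = (1 + c(z)w^k)w^{k+1}\dop{w}$ for some formal power series $c(z)$. Write $M=\{v=u\tilde\psi(z,\bar z,u)\}$ and decompose $\tilde\psi = \sum_{\ell\geq k}\tilde\psi_\ell$ with respect to the grading $[z]=[\bar z]=0$, $[w]=1$; each component then takes the form $\tilde\psi_\ell = \Psi_\ell(z,\bar z)\,u^\ell$.

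The crucial ingredient---both easier and more decisive than in the generic case---is that the tangency equation, written on $M$ as $\imag Q = \real Q\cdot\psi_u$ with $Q = (1+c(z)w^k)w^{k+1}$, already pins down $c(z)$ at its lowest nontrivial weight. Substituting $w = u(1+i\tilde\psi)$ and $\bar w = u(1-i\tilde\psi)$ and collecting the weight-$(2k+1)$ contributions, the left- and right-hand sides reduce to
\[(k+1)\Psi_k(z,\bar z)\,u^{2k+1} + \frac{c(z)-c^*(\bar z)}{2i}\,u^{2k+1}\quad\text{and}\quad (k+1)\Psi_k(z,\bar z)\,u^{2k+1},\]
respectively, where $c^*(\bar z) := \sum_n \bar c_n\bar z^n$; the weight-$(2k+1)$ part of $\real Q$ pairs with $\psi_u$ (which itself starts at weight $k$) only at weights $\geq 3k+1$ and so contributes nothing here. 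Cancellation enforces $c(z) \equiv c^*(\bar z)$ as an identity of formal power series in the \emph{independent} variables $z$ and $\bar z$. Since the two sides involve disjoint sets of variables, this is only possible if $c(z) \equiv r$ is constant with $r = \bar r \in\R$. Hence
\[X = w^{k+1}\dop w + r w^{2k+1}\dop w,\qquad r\in\R,\]
which is the form \eqref{e:nfspec1}.

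The remaining assertions on $\psi$ follow the same pattern as in \autoref{l:deletemost}. The change of coordinates $\Phi(z^*,w^*) = (z^*,\,w^*(1+2i\tilde\psi_k(z^*,0,w^*)))$ annihilates the harmonic part of $\tilde\psi_k(z,0,u)$, and a direct push-forward computation shows that $\Phi$ sends $X$ to a vector field of the form $(1+r(w^*)^k+O((w^*)^{2k}))(w^*)^{k+1}\dop{w^*}$; the excess higher-order terms are removable by a further $z^*$-preserving change $w^* \mapsto w^*(1+h(z^*,w^*))$ realizing the one-dimensional Poincar\'e--Dulac normalization of $(1+r w^k)w^{k+1}\dop w$ carried out parametrically in $z^*$. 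Because this $w$-change fixes $z^*$, the achieved normalization of $\tilde\psi_k$ survives modulo corrections at strictly higher weight, so that iterating the pair of moves converges formally. A concluding purely holomorphic $z$-change $z\mapsto F(z)$---which leaves $X$ unaltered since $X$ has no $\dop z$-component---then absorbs the higher $\bar z$-terms in the $z^p$-coefficient of $\Psi_k(z,\bar z)$, producing the desired leading form $\tilde\psi_k = Cz^p\bar z^q u^k + O(z^{p+1})$. The principal technical subtlety, as in the generic case, is the convergence of the iterative normalization, which is guaranteed by the strictly weight-increasing nature of the corrections at each stage.
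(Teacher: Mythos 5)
Your proof is correct and takes essentially the same route as the paper, whose own proof simply defers to the argument of \autoref{l:deletemost} with the weights $[z]=0$, $[w]=1$; you faithfully fill in the details that the paper leaves implicit. The only (harmless) reordering is that you extract $c(z)\equiv r\in\R$ directly from the weight-$(2k+1)$ tangency identity \emph{before} normalizing $\psi$ --- legitimate here because with $\mu=0$ the $(k+1)\Psi_k$ contributions cancel identically on both sides, whereas the generic-case template must first remove the pure terms of $\tilde\psi_k$ via the coordinate change $\Phi$.
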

\begin{proof}
The proof is very similar to that for \autoref{l:deletemost} with the only difference that the decomposition of the defining function of $M$ is now $$\psi(z,\bar z,u)=\sum_{j=k}^\infty \psi_j(z,\bar z,u),\,\,\psi_j(z,\bar z,u)=u^j\theta_j(z,\bar z)$$
(according to \autoref{psik}), and the appropriate weights are $[z]=0,\,[w]=1$ (actually with accordance to \eqref{k2k}). 
\end{proof}

We thus arrive to
\begin{proposition}\label{nfspec1}
Every vector field $X$ with $\alpha_k\equiv 0$ in \eqref{key} which is tangent to a real-analytic Levi-nonflat hypersurface  can be brought by a formal invertible  transformation to the normal form \eqref{e:nfspec1}.  All the parameters present in the normal form \eqref{e:nfspec1} actually occur, i.e. for each collection of $k\geq 0,r\in\RR{}$ there exists a real-analytic Levi-nonflat hypersurface with $X\in\mathfrak{aut}\,(M,0)$.
\end{proposition}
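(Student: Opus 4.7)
The first assertion — existence of a formal transformation bringing $X$ to \eqref{e:nfspec1} — I would obtain by simply assembling the preceding results. \autoref{ordinf} reduces $X$ formally to a vector field $Z_c$ of the shape \eqref{c(z)}. In the weights $[z]=0,\;[w]=1$, the decomposition \eqref{k2k} then applies to $Z_c$ with $\alpha\equiv 0$ and $\delta(z,w)=c(z)w^k$, and combining this with \autoref{psik} and \autoref{main1} produces the normal form \eqref{e:nfspec1} (after a formal biholomorphism and a real rescaling).

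For the realization statement, I fix $k\geq 0$ and $r\in\mathbb{R}$ and consider $X=(w^{k+1}+rw^{2k+1})\dop{w}$. The plan is to construct $M$ directly from the tangency equation. The key observation I would exploit is that $X$ has no $\dop{z}$-component, so the real flow of $\re X$ preserves every complex fiber $\{z=\mathrm{const}\}$, and a $3$-dimensional real integral manifold of $X$ is precisely a real-analytic $(z,\bar z)$-family of real orbits of $\re X$ in the $w$-plane. Writing $M=\{v=\psi(z,\bar z,u)\}$ and $W:=u+i\psi$, the tangency equation $\re X(v-\psi)|_{v=\psi}=0$ becomes
\[
\im\bigl[W^{k+1}+rW^{2k+1}\bigr]-\psi_u\,\re\bigl[W^{k+1}+rW^{2k+1}\bigr]=0,
\]
a single real relation in $\psi,\psi_u$ with $z,\bar z$ entering only as parameters and with no derivatives in $z,\bar z$.

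Expanding at lowest order in $u,\psi$, this reads $u\psi_u=(k+1)\psi+\cdots$, which suggests the ansatz $\psi(z,\bar z,u)=c(z,\bar z)\,u^{k+1}+O(u^{k+2})$ for any nontrivial real-analytic $c(z,\bar z)$ (for instance $c(z,\bar z)=|z|^{2s}$ with $s\geq 1$). After factoring out the common power $u^k$ — which regularizes the degeneracy at $u=\psi=0$ — I obtain a regular real-analytic ODE $\psi_u=\Phi(z,\bar z,u,\psi)$, and Cauchy-Kovalevskaya (applied in $u$, with $z,\bar z$ as parameters) yields a unique real-analytic solution realizing the prescribed initial data. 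The resulting hypersurface $M$ is Levi-nonflat as long as $c$ depends genuinely on both $z$ and $\bar z$, has nonminimality order $m=k+1$, and by construction $\re X$ is tangent to $M$. A final biholomorphic adjustment then puts $M$ into the normal coordinates of \eqref{mnonminimal} without disturbing the form of $X$.

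The main obstacle I anticipate is the degeneracy of the tangency equation at the origin — both numerator and denominator of $\Phi$ vanish at $u=\psi=0$ — but this is resolved by extracting the common factor $u^k$ as above. A cleaner alternative would be to adapt \autoref{pro:uniquehypersurface} to this exceptional setting, inductively constructing $\tilde\psi$ order by order from a starting datum $\tilde\psi_k$; convergence of the resulting formal series is considerably easier to establish here than in the generic case treated by \autoref{converge}, because the absence of the $\dop{z}$-coupling decouples the estimates for Taylor coefficients of $\psi$ in $z,\bar z$ (which enter only as parameters).
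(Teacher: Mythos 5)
Your reduction of the first assertion to \autoref{ordinf}, \autoref{psik} and \autoref{main1} is exactly what the paper does, and your strategy for realizability is also the paper's: since $X$ has no $\dop{z}$-component, the tangency equation is an ODE in $u$ with $(z,\bar z)$ entering only as parameters, to be solved with a free function of $(z,\bar z)$ as datum. However, the concrete step ``after factoring out the common power $u^k$ I obtain a regular real-analytic ODE $\psi_u=\Phi(z,\bar z,u,\psi)$'' would fail. Writing $W=u+i\psi$, one has $\im W^{k+1}=\sum_j(-1)^j\binom{k+1}{2j+1}u^{k-2j}\psi^{2j+1}$, so dividing the tangency equation by $u^k$ leaves terms $u^{-2j}\psi^{2j+1}$: the resulting right-hand side is \emph{not} an analytic function of $(u,\psi)$ near the origin, and the lowest-order relation is $u\psi_u=(k+1)\psi+\cdots$, a Briott--Bouquet equation with the resonant integer eigenvalue $k+1$, not a regular ODE. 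Cauchy--Kovalevskaya therefore cannot be applied to $\psi$ as you state; moreover, the would-be initial condition $\psi(z,\bar z,0)$ is forced to vanish (the line $w=0$ lies in $M$), so it cannot carry the free function $c(z,\bar z)$.

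The repair is precisely the substitution your ansatz gestures at but which you must perform \emph{before} dividing: set $\psi=u^{k+1}f(z,\bar z,u)$. Then $\im\bigl(W^{k+1}+rW^{2k+1}\bigr)=u^{2k+1}A(u,f)$ and $\re\bigl(W^{k+1}+rW^{2k+1}\bigr)=u^{k+1}B(u,f)$ with $A=(k+1)f+O(u^k)$ and $B=1+O(u^k)$ analytic, and the tangency equation becomes $(k+1)f-uf_u=A/B=(k+1)f+O(u^k)$, i.e. $uf_u=O(u^k)$ --- the resonance cancels and one is left with a genuinely regular ODE for $f$, solvable and convergent for every analytic initial value $f(0)=c(z,\bar z)$ (taking $c=|z|^2$ yields Levi-nonflatness). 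This is the paper's proof. So what you offer as a ``cleaner alternative'' is in fact the necessary route rather than an optional refinement; with the substitution made explicit, your argument closes the gap and coincides with the paper's.
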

\begin{proof}
  It remains to prove the realizability of each vector field \eqref{e:nfspec1} as an infinitesimal automorphism. Realizing that in spirit, the variables $z$ should be treated
  as having weight $0$, we make the Ansatz
  \[ v= u^\ell A(z,\bar z, u) = u^\ell f(u),  \]
  thinking of $f$ as a power series in $u$ having coefficients which are functions of $(z,\bar z )$. The tangency equation becomes
  \[
    \begin{aligned}
      2\real X (v-u^\ell f(u)) & = 2\real \left( (w^{k+1}  + r w^{2k+1})\dop{w} \right)(v- u^\ell f(u)) \\
                              &= 2\real   (w^{k+1}  + r w^{2k+1}) ) \left( \frac{1}{2i} - \frac{1}{2} (\ell u^{\ell-1} f - u^\ell f_u ) \right)  \\
                               &= \imag  (w^{k+1}  + r w^{2k+1}) -  \real(w^{k+1} +r w^{2k+1})  (\ell u^{\ell-1} f - u^\ell f_u ) \\
      &=0,
    \end{aligned}
  \]
  if $w = u+ i u^\ell f(u)$. We compute that
  \[\begin{aligned} \imag w^{k+1} + r w^{2k+1} &= \sum_{j=0}^{\lfloor \frac{k+1}{2} \rfloor} (-1)^j \binom{k+1}{2j+1} u^{k+1+(\ell-1)(2j+1)} f(u)^{2j+1} \\
                                               & \quad +r  \sum_{m=0}^{\lfloor \frac{2k+1}{2} \rfloor} (-1)^m \binom{2k+1}{2m+1} u^{2k+1+(\ell-1)(2m+1)} f(u)^{2m+1} \\
    & = u^{k+\ell} A(u,f(u)) \end{aligned}
      \]
      and
   \[\begin{aligned} \real w^{k+1} + r w^{2k+1} &= \sum_{j=0}^{\lfloor \frac{k+1}{2} \rfloor} (-1)^j \binom{k+1}{2j} u^{k+1+(\ell-1)(2j)} f(u)^{2j} \\
                                               & \quad + r \sum_{m=0}^{\lfloor \frac{2k+1}{2} \rfloor} (-1)^m \binom{2k+1}{2m} u^{2k+1+(\ell-1)(2m)} f(u)^{2m} \\
    & = u^{k+1} B(u,f(u)) \end{aligned},
\]
where $A(u,f) = (k+1)f + O(u,f^2) $ and $B(u,0)=1$. We therefore get the following differential equation for $f$:
\[   (\ell f - u f_u) = \frac{A(u,f)}{B(u,f)} \] 
Treating the latter as an ODE for $f=f(u)$ for each fixed $z$, for any choice of a function $f(0) = c(z,\bar z)$, we claim that this Fuchsian (Briott-Bouquet) differential equation has a solution provided that $\ell = k+1$. First note that $\ell = k+1$ is forced if $f(0) \neq 0$.
Now note that with
the choice $\ell=k$, the formulas for $A$ and $B$ simplify to
\[
  \begin{aligned}
    A(u,f) &=  \sum_{j=0}^{\lfloor \frac{k+1}{2} \rfloor} (-1)^j \binom{k+1}{2j+1} u^{2jk} f^{2j+1} 
             + r \sum_{m=0}^{\lfloor \frac{2k+1}{2} \rfloor} (-1)^m \binom{2k+1}{2m+1} u^{(2m+1)k} f^{2m+1} = (k+1) f +O(u^k)   \\
    B(u,f) & =\sum_{j=0}^{\lfloor \frac{k+1}{2} \rfloor} (-1)^j \binom{k+1}{2j} u^{2jk} f^{2j} + r \sum_{m=0}^{\lfloor \frac{2k+1}{2} \rfloor} (-1)^m \binom{2k+1}{2m} u^{(2m+1)k} f^{2m} = 1 + O(u^k)   
  \end{aligned}
\]
so that the equation for $f$ can be rewritten in the form
\[ (k+1) f- u f_u =  R(u^k, f) = (k+1)f + O(u^k).  \]
It follows that this Fuchsian ODE is actually a true ODE in disguise, from which existence and convergence of $f$ with any given convergent initial data $f(0) = c(z,\bar z)$ follows. 
\end{proof}

\begin{theorem}\label{converge-spec}
Let $X$ be a holomorphic vector field near the origin satisfying $\alpha_k\equiv 0$ in \eqref{key} which is tangent to a real-analytic Levi-nonflat infinite type hypersurface $M$ through $0$. Then there exists a biholomorphic transformation bringing it to the normal form \eqref{e:nfspec1}.
\end{theorem}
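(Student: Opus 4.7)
The plan is to upgrade the formal normalization from \autoref{nfspec1} to a convergent one, mimicking the proof of \autoref{converge} with the simplification that $X_N = (w^{k+1}+rw^{2k+1})\dop{w}$ has no $\dop{z}$ component. First I analyze formal symmetries of $X_N$, i.e., formal vector fields $L = f\dop{z} + wg\dop{w}$ with $[X_N,L]=0$; a direct calculation forces $f = f(z)$ (independent of $w$) and $g = C(z) w^k (1 + O(w))$ for an arbitrary formal power series $C(z)$. This identifies the resonant monomials in the inverse normalizing transformation $H_0 = (z+F,\, w+wG)$: the pure $z$-terms $z^\alpha$ in $F$ and the monomials $z^\alpha w^k$ in $G$. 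Following the strategy of \autoref{symmetries}, I fix $H_0$ so that all such resonant coefficients vanish; this is possible because the family of formal symmetries above realizes exactly the ambiguity built into the normalization algorithm of \autoref{ordinf}.

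Next, writing out the pushforward condition $X \circ H_0 = dH_0 \cdot X_N$ yields
\begin{equation*}
(w^{k+1}+rw^{2k+1}) F_w = P \circ H_0, \qquad (w^{k+1}+rw^{2k+1})(1 + G + wG_w) = Q \circ H_0,
\end{equation*}
where $P = \sum_{\ell\geq k+1}\alpha_\ell(z) w^\ell$ and $Q = w^{k+1} + \sum_{\ell\geq k+1}\beta_\ell(z) w^{\ell+1}$ (after rescaling so $\beta_k = 1$). Since $P\circ H_0$ and $Q\circ H_0$ are both divisible by $w^{k+1}$, division by $w^{k+1}$ transforms the system into
\begin{equation*}
F_w = A(z,w,F,G), \qquad w G_w - k G = B(z,w,F,G),
\end{equation*}
with $A, B$ analytic in their arguments and $B(0,0,0,0) = 0$.

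Finally, expanding $F = \sum F_{\alpha\beta} z^\alpha w^\beta$ and $G = \sum G_{\alpha\beta} z^\alpha w^\beta$ and comparing coefficients gives recurrences
\[ \beta F_{\alpha\beta} = P_{\alpha\beta}, \qquad (\beta - k) G_{\alpha\beta} = Q_{\alpha\beta}, \]
where $P_{\alpha\beta}, Q_{\alpha\beta}$ are universal polynomials with nonnegative coefficients in the Taylor coefficients of $F, G$ of lower total order and in those of $P, Q$. Outside the resonant locus ($\beta = 0$ for $F$, $\beta = k$ for $G$), which by our construction of $H_0$ vanishes, the denominators $\beta$ and $\beta - k$ are nonzero integers and hence have absolute value $\geq 1$. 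The Cauchy majorants method of \autoref{converge} now applies verbatim: replacing the denominators by $1$ and the Taylor coefficients of $P, Q$ by their absolute values produces a majorant system whose unique analytic solution $(F^*, G^*)$ is furnished by the implicit function theorem, and induction on $\alpha + \beta$ yields $|F_{\alpha\beta}| \leq F^*_{\alpha\beta}$ and $|G_{\alpha\beta}| \leq G^*_{\alpha\beta}$, hence convergence of $F$ and $G$. The main obstacle is the initial step of pinning down a canonical $H_0$ whose inverse has vanishing resonant parts; once that ambiguity is resolved, the majorization argument proceeds essentially identically to the generic case.
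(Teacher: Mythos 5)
Your proposal is correct and follows exactly the route the paper takes: the paper's own proof of this theorem is a one-line reduction to the method of \autoref{converge}, and you carry out precisely that analogy (compute the symmetries of $X_N$ to identify the resonant monomials $z^\alpha$ in $F$ and $z^\alpha w^k$ in $G$, fix $H_0$ so that these vanish in the inverse, then run the majorant scheme on the resulting small-divisor-free recurrences). The only cosmetic point is that, as written, your second equation still hides the term $rw^{k+1}G_w$ inside $B$; either keep $(w+rw^{k+1})G_w$ on the left or first reduce $(w^{k+1}+rw^{2k+1})\dop{w}$ to $w^{k+1}\dop{w}$ via \eqref{dim1} as the paper does in the generic case --- neither change affects the majorant estimate.
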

The proof of the theorem is very analogous to that for \autoref{converge} in the generic case, and we leave the details for the reader.

\medskip

\subsection{Case $B=0$} We finally consider the remaining exceptional case $$\beta_k=B=0$$ in \eqref{key}. Recall that then we have 
$$A\in i\RR{}$$ and 
$$\varphi_s(z,\bar z,u)=|z|^s$$ in \eqref{mnonminimal}.  According to \autoref{exceptcase}, we can (formally) normalize the vector field as 
\begin{equation}\label{nfspec2a}
 X =   i z w^{k}(1+\sum_{j\geq 1} c_jw^j)  \dop{z} 
+  r w^{k+1}(w^q+...) \dop{w} 
\end{equation}
where $q \geq 1$ and dots stand for terms of higher order. This is what one gets from the normalization procedure in Section 2, {\em and to normalize further one needs to consider the further nondegenerate terms in \eqref{nfspec2a}}. 

Assume first $r\neq 0$. We claim that $X$, as in  \eqref{nfspec2a}, can be then further normalized as
\begin{equation}\label{nfspec3}
 X =   i z w^{k}(1+\tilde c_1w+...+\tilde c_{q}w^{q})  \dop{z} 
+  (r w^{k+1+q}+tw^{2q+2k+1}) \dop{w}.
\end{equation}
Indeed, applying a transformation
$$z\mapsto zf(w), \quad w\mapsto w+wg(w), \quad f(0)=1,\,\,g(0)=0,$$ and working out the desired transformation property, we obtain the following system of ODEs for $f,g$:
\beq\label{DEs}
\begin{aligned}
&i\Bigl(1+\sum_{j\geq 1} c_jw^j\Bigr)f+(rw^{q+1}+...)f'= i(1+g)^k\Bigl(1+\tilde c_1w(1+g)+...+\tilde c_{q}w^{q}(1+g)^{q}\Bigr)f, \\
&r(1+...)(1+wg'+g)=r(1+g)^{q+k+1}+tw^{q+k}(1+g)^{2q+2k+1}.
\end{aligned}
\eeq
The second ODE in \eqref{DEs} can be solved uniquely for $g$ with $g(0)=1$, except in the degree $q+k$ correponding to the resonant term $w^{q+k}$ in $g$; in the latter case, the unique solvability is accomplished by an appropriate choice of the coefficient $t$. We then substitute the determined above function $g(w)$ (which is now fixed) into the first ODE in \eqref{DEs}, and find the unique collection of coefficients $\tilde c_1,...,\tilde c_q$ for which it holds that
$$(1+\sum_{j\geq 1} c_jw^j\Bigr) - (1+g)^k\Bigl(1+\tilde c_1w(1+g)+...+\tilde c_{q}w^{q}(1+g)^{q}\Bigr) = O(w^{q+1}).$$
With the latter property, the first ODE in \eqref{DEs} turns into a regular ODE for $f$ of the kind
$f'=A(w)f$ with a holomorphic $A(w)$, which we solve uniquely with $f(0)=1$. This proves the existence of the further normalization \eqref{nfspec3}.

We shall further take into account the existence of a real integral manifold for $X$.
For a Levi-nonflat hypersurface $v=\psi(z,\bar z,u)$ passing through $0$, we write down the tangency condition with $\re X$ of the form \eqref{nfspec3} and get:

\begin{equation}\label{basic1}
\re \left( r (u+ i \psi)^{k+q+1} +t (u+ i \psi)^{2k+2q+1}  \right) \left(\frac{1}{2i} - \frac12 \psi_u \right) 
= 
  \re\bigl(i z  \psi_z (u+ i \psi)^k (1 + \tilde c_1(u + i \psi) + \dots + \tilde c_q(u + i \psi)^q )\bigr)
\end{equation}

First we observe that, if such $M$ exists, then one necessarily has 
$$r\in\RR{}.$$ Indeed, arguing by contradiction, we see that $r$ with $\im r\neq 0$ would produce in \eqref{basic1} a nonzero term with $u^{k+q+1}$ which is not compensated by any other term, which gives a contradiction. Second, we claim that
$$c_j \in \mathbb R,\quad j=1, \dots, q.$$ Indeed, arguing by contradiction, we take the smallest $j$ with $\Im c_j \neq 0$. It produces in the tangency equation \eqref{basic1} a nonzero term of the form $u^{k+j+m}|z|^s$. On the other hand, there is no other contribution to such a monomial, which leads to contradiction.

We will further show the converse, i.e., for every choice of $c_j \in \mathbb R$, $j = 1, \dots, q$  there is a real integral hypersurface of the vector field \eqref{nfspec3}.

By a similar argument as in \autoref{B=0},  $\psi$ has to be of the form
$$
v = \psi(z,\bar z, u) = \sum_{j=0}^{\infty} \psi_j (u) \vert z\vert^{2j}.
$$
Based on that, we will be looking for an integral manifold with a defining function of the form
$$\psi(z,\bar z,u)=u^{k+q+1}\varphi(|z|^2,u)$$
with an analytic near the origin $\varphi=\varphi(t,u)$.
Plugging such a defining function into the tangency condition of $M$ with a vector field $X$, as in \eqref{nfspec3}, we obtain:
\beq\label{tangnew}
\im (r w^{k+1+q}+tw^{2q+2k+1})=\bigl(u^{k+q+1}\varphi_u+(k+q+1)u^{k+q}\varphi\bigr)\cdot\re(r w^{k+1+q}+tw^{2q+2k+1})+
\eeq
$$+2u^{k+q+1}\re\bigl(iz\bar z\varphi_t\cdot w^{k}(1+\tilde c_1w+...+\tilde c_{q}w^{q})\bigr)\Bigl|_{w=u+iu^{k+q+1}\varphi}.$$ 
The latter identity, after dividing by $u^{2k+2q+1}$ and simplifying, turns into a {\em nonsingular} PDE for  $\varphi=\varphi(t,u)$:
\beq\label{FuchsPDE}
\varphi_u=H(u,t,\varphi,\varphi_t),
\eeq
where $H$ is holomorphic near the origin (this is due to the fact that the linear terms $const\cdot\varphi$ cancel in \eqref{tangnew}). This proves that for any analytic initial data $C(t)=\varphi(0,t)$ there exists a (unique) analytic solution of the respective Cauchy-Kovalewski problem, giving the desired integral manifolds.

Hence we obtain

\begin{theorem}\label{nfspec2}
Every vector field $X$ with $r\neq 0$ in \eqref{nfspec2a} can be brought by a formal invertible  transformation to the normal form

\begin{equation}\label{nfspec3a}
 X =   i z w^{k}(1+ c_1w+...+ c_{q}w^{q})  \dop{z} 
+  (r w^{k+1+q}+tw^{2q+2k+1}) \dop{w}, 
\end{equation}
where $k \ge 1, q \ge 1,  r \in \mathbb R^*, t, c_1, \dots, c_q \in \mathbb R$. 
All the parameters present in the normal form \eqref{nfspec1} actually occur, i.e. for each collection of $k\geq 1, q \ge 1, r\in \mathbb R^*, t, c_1, \dots, c_q  \in \mathbb R $ there exists a formal Levi-nonflat hypersurface with $X\in\mathfrak{aut}\,(M,0)$. 
\end{theorem}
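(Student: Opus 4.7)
The plan is to synthesize three ingredients already prepared in the preceding discussion: a direct ODE-based normalization of the vector field (purely from the Complex Dynamics side), two reality arguments extracted from the tangency equation, and a Cauchy--Kovalevskaya-type realization argument.

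First, starting from the prenormal form \eqref{nfspec2a} with $r \neq 0$, I would apply the transformation $(z,w) \mapsto (zf(w),\ w+wg(w))$ with $f(0)=1$, $g(0)=0$, and insist that the transformed vector field equal the candidate normal form \eqref{nfspec3}. Working out the pushforward yields the ODE system \eqref{DEs}. The key observation is that the second equation is uniquely solvable order-by-order in $w$ for $g$, with a single obstruction appearing at the resonant order $w^{q+k}$; choosing $t \in \R$ to absorb this obstruction determines $g$ uniquely. Substituting $g$ back into the first equation in \eqref{DEs}, one chooses $\tilde c_1,\dots,\tilde c_q$ uniquely so that the $O(w^q)$ discrepancy between $1+\sum c_j w^j$ and $(1+g)^k(1+\tilde c_1 w(1+g)+\cdots)$ vanishes, reducing the first equation to a regular (non-Fuchsian) linear ODE $f'=A(w)f$, which is then solved uniquely with $f(0)=1$.

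Second, the reality conditions $r \in \R$ and $c_1,\dots,c_q \in \R$ follow from weight balancing in the tangency equation \eqref{basic1}, by the same style of argument as in Remark~\ref{B=0}. If $\im r \neq 0$, the monomial $u^{k+q+1}$ would appear on the left of \eqref{basic1} with no possible compensation on the right; taking the smallest $j$ with $\im c_j \neq 0$, a monomial of the form $u^{k+j+m}|z|^s$ is produced on the right that cannot be cancelled by any other term. Both contradictions force reality.

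Third, for the realization part I would make the Ansatz $\psi(z,\bar z,u) = u^{k+q+1}\varphi(|z|^2,u)$, motivated by the absence of harmonic terms (cf.\ Remark~\ref{B=0}) and by the weighting $[z]=0$, $[w]=1$ compatible with \eqref{k2k} in the present exceptional case. Plugging this Ansatz into the tangency condition and dividing by $u^{2k+2q+1}$, one obtains \eqref{FuchsPDE}, a \emph{nonsingular} first-order PDE for $\varphi(t,u)$, because the linear-in-$\varphi$ terms cancel. The Cauchy--Kovalevskaya theorem then produces an analytic solution for any prescribed analytic initial datum $\varphi(t,0) = C(t)$, yielding a real-analytic Levi-nonflat hypersurface $M$ with $X \in \aut(M,0)$ for every choice of the parameters $(k,q,r,t,c_1,\dots,c_q)$ in the stated range.

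The main obstacle I anticipate is the careful bookkeeping in the first step: one must verify that the coefficient of $t$ in the obstruction at order $w^{q+k}$ in the $g$-equation is nonzero (so that $t$ is genuinely the only free parameter needed, and the normalization is determined), and that the subsequent substitutions into the $f$-equation do not reintroduce singular behavior that would prevent convergence of the formal solution. All other steps are routine Fuchsian ODE theory, weight-comparison in real-analytic expansions, and an application of Cauchy--Kovalevskaya.
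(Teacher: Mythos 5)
Your proposal follows essentially the same route as the paper's own argument: the normalization via $(z,w)\mapsto(zf(w),\,w+wg(w))$ leading to the ODE system \eqref{DEs} with the resonance at order $w^{q+k}$ absorbed by $t$, the reality of $r$ and the $c_j$ extracted from uncompensated monomials in the tangency equation \eqref{basic1}, and the realization via the Ansatz $\psi=u^{k+q+1}\varphi(|z|^2,u)$ reducing to the nonsingular PDE \eqref{FuchsPDE} solved by Cauchy--Kovalevskaya. The care point you flag (nonvanishing of the coefficient of $t$ at the resonant order) is easily checked since that term enters as $tw^{q+k}(1+O(w))$, so the argument is complete and matches the paper.
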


Now consider the case $r = 0$.
We claim that $k=0$ in this case. Indeed, for $k>0$, we first easily deduce that $\psi=\psi(|z|^2,u)$ (this is obtained by considering in \eqref{basic1} one by one terms of minimal total degree in $z,\bar z$). Then, considering in \eqref{basic1} the term $z^i\bar z^ju^l$ with minimal possible $i+j$, then minimal possible $l$ and then minimal possible $i$ generates in \eqref{basic1} a nonzero term with $z^{2i}\bar z^{2j}u^{2l}$, which on the other hand must vanish, which is a contradiction. Thus, we conclude that
  $k=0$ and 
\begin{equation}\label{except}
X=iz\dop{z}.
\end{equation}

Thus we arrive at 

\begin{proposition}\label{nfspec1f}
Every vector field $X$ with $\alpha_k\equiv 0$ in \eqref{key} and $r=0$ in \eqref{nfspec2a}  which is tangent to a real-analytic Levi-nonflat hypersurface satisfies $k=0$ and can be brought by a formal invertible  transformation to the normal form \eqref{except}.
 
\end{proposition}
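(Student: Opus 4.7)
The plan is to exploit the fact that with $r = 0$ the vector field reduces to
\[
X = iz w^k\bigl(1+\textstyle\sum_{j\geq 1} c_jw^j\bigr)\dop{z},
\]
which has no $\dop{w}$-component. Writing $\rho = v - \psi(z,\bar z,u)$ and noting $X(v) = 0$, a short calculation gives $X(\rho) = -izw^k(1+\sum c_jw^j)\psi_z$, so the tangency condition $\re X(\rho)|_{\rho=0} = 0$ is equivalent to the single scalar identity
\[
\im\bigl[zw^k\bigl(1+\textstyle\sum c_jw^j\bigr)\psi_z\bigr]\Bigl|_{w=u+i\psi}\equiv 0,
\]
to be satisfied as a formal power-series identity in $(z,\bar z,u)$.

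The first step is to show $\psi = \tilde\psi(|z|^2,u)$, as suggested in the text preceding the proposition. Expand $\psi = \sum_{\alpha,\beta\geq 1}\psi_{\alpha,\beta}(u)z^\alpha\bar z^\beta$ (no harmonic terms) and pick $(\alpha_0,\beta_0)$ of minimal total degree with $\psi_{\alpha_0,\beta_0}\not\equiv 0$. At this minimal degree the substitution $w=u+i\psi$ does not produce any $\psi$-correction at the current $(z,\bar z)$-degree (all such corrections strictly raise it), so the leading contribution to the tangency equation reduces to $u^k(1+\sum c_ju^j)\cdot\im[\alpha_0\psi_{\alpha_0,\beta_0}(u)z^{\alpha_0}\bar z^{\beta_0}]\equiv 0$. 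The prefactor is a unit in $\C[[u]]$, and using $\overline{\psi_{\alpha,\beta}} = \psi_{\beta,\alpha}$ the imaginary part decomposes into the two independent monomials $z^{\alpha_0}\bar z^{\beta_0}$ and $z^{\beta_0}\bar z^{\alpha_0}$, whose vanishing forces $\alpha_0=\beta_0$. An induction on total degree---at each step the nonlinear $\psi$-corrections involve only already-diagonalized coefficients from lower degrees---then gives $\psi = \tilde\psi(|z|^2,u)$ with $\tilde\psi\in\R[[t,u]]$, $t:=|z|^2$.

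The contradiction comes next. Set $P(w):=w^k\bigl(1+\sum_{j\geq 1}c_jw^j\bigr)$; since $\psi_z = \bar z\tilde\psi_t$, the tangency identity becomes $|z|^2\tilde\psi_t\cdot\im P(u+i\tilde\psi)\equiv 0$. Levi-nonflatness forces $\tilde\psi_t\not\equiv 0$, so in the integral domain $\C[[z,\bar z,u]]$ we obtain $\im P(u+i\tilde\psi)\equiv 0$. Evaluating at $t=0$, where $\tilde\psi$ vanishes by absence of harmonic terms, yields $\im P(u)\equiv 0$ and hence all $c_j\in\R$. With $P$ now real-coefficient, the expansion
\[
\im P(u+iv) = P'(u)\,v - \tfrac{1}{6}P'''(u)\,v^3 + \dots,
\]
combined with the leading term $\tilde\psi_{p_0}(u) t^{p_0}$ of $\tilde\psi$ ($p_0\geq 1$ minimal with $\tilde\psi_p\not\equiv 0$), forces $P'(u)\tilde\psi_{p_0}(u)\equiv 0$ at order $t^{p_0}$, whence $P'\equiv 0$. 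Thus $P$ is constant, which together with $P(w)=w^k+\dots$ forces $k=0$ and $c_j=0$ for every $j\geq 1$, giving $X=iz\dop{z}$. Realizability is witnessed by $M = \{v = u|z|^2\}$: a direct computation confirms $2\re(iz\dop{z})(v-u|z|^2)=0$ on $M$, and $M$ is manifestly Levi-nonflat.

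The main technical subtlety lies in the induction establishing $\psi=\tilde\psi(|z|^2,u)$: one must order monomials carefully (by $(z,\bar z)$-degree, then by $u$-degree, then by $z$-degree, as the text suggests) and verify at each step that the nonlinear contributions coming from $w=u+i\psi$ and from the factor $1+\sum c_jw^j$ cannot create off-diagonal cancellations at the current degree outside the control of the inductive hypothesis. This is a bookkeeping issue rather than a conceptual one, but it is the single place where a careless argument could fail to close.
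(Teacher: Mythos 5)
Your proof is correct and follows essentially the same route as the paper: first forcing $\psi=\tilde\psi(|z|^2,u)$ from the minimal-degree terms of the tangency identity, and then showing that a nonconstant coefficient $P(w)=w^k(1+\sum_j c_jw^j)$ produces an uncancellable term --- your obstruction $P'(u)\tilde\psi_{p_0}(u)$ is exactly the paper's ``nonzero term with $z^{2i}\bar z^{2j}u^{2l}$'' once one divides out the factor $t\tilde\psi_t$. Your factorization $t\tilde\psi_t\,\im P(u+i\tilde\psi)\equiv 0$ is a cleaner packaging and has the added benefit of explicitly forcing $c_j=0$ in the $k=0$ case (so that no further normalization is needed to reach $iz\,\partial_z$), a point the paper leaves implicit.
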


Our immediate goal is to investigate the convergence of the normal forms \eqref{nfspec3}, \eqref{except}. 

In the case of rotational normal form \eqref{except}, the problem is treated identically to the above exceptional case $A=0,B\neq 0$, and we obtain
\begin{proposition}
In the assumptions of \autoref{nfspec1}, there is a biholomorphic transformation bringing the vector field $X$ to the normal form \eqref{except}. 
\end{proposition}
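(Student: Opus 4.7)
The plan is to mirror the convergence proof of \autoref{converge}, simplified by the much tamer structure of the target normal form $X_N = iz\dop{z}$. Three ingredients are needed: (i) a description of the formal symmetries of $X_N$; (ii) uniqueness of the normalizing transformation once its inverse is required to kill the resonances; (iii) Cauchy majorant bounds on the Taylor coefficients of the transformation.

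For (i), a formal vector field $L = f\dop{z} + g\dop{w}$ vanishing at $0$ commutes with $X_N$ precisely when $zf_z - f = 0$ and $zg_z = 0$, i.e.\ when $f = zA(w)$ and $g = B(w)$ for some formal series $A,B$. Hence the resonant monomials in a normalizing map $H = (z + F, w + G)$ are $F \in z\C[[w]]$ and $G \in \C[[w]]$. An argument that parallels \autoref{symmetries}, matching at each order the dimension of the space of normalizing transformations (via the exponential formula \eqref{expon}) with the dimension of the symmetry freedom, then singles out a unique $H_0$ such that $H_0^{-1}$ has vanishing resonant part, which is (ii).

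For (iii), we write out $H_0 \circ X_N = X \circ H_0$ explicitly. Using that $X$ agrees with $X_N$ to leading order courtesy of the prenormalization of \autoref{nfspec1f}, we obtain a Briott--Bouquet type system of PDEs
\begin{equation*}
izF_z - iF = \mathcal{A}(z,w,F,G), \qquad izG_z = \mathcal{B}(z,w,F,G),
\end{equation*}
with $\mathcal A, \mathcal B$ analytic near the origin and vanishing there together with their differentials (analyticity being inherited from the tangency data supplied by $M$). Expanding $F = \sum F_{\alpha\beta}z^\alpha w^\beta$, $G = \sum G_{\alpha\beta}z^\alpha w^\beta$ and extracting coefficients yields homological equations
\begin{equation*}
i(\alpha - 1)F_{\alpha\beta} = P_{\alpha\beta}, \qquad i\alpha\, G_{\alpha\beta} = Q_{\alpha\beta},
\end{equation*}
where $P_{\alpha\beta}, Q_{\alpha\beta}$ are universal polynomials with nonnegative coefficients in the Taylor data of $\mathcal A, \mathcal B$ and in the previously determined Taylor coefficients of $F, G$. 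Away from the resonant indices ($\alpha = 1$ in the first, $\alpha = 0$ in the second), the divisors $|\alpha - 1|$ and $|\alpha|$ are bounded below by $1$. Hence the Cauchy majorant argument of \autoref{converge} goes through verbatim: replace the divisors by $1$, the right-hand sides by their absolute-value majorants, and dominate $(F, G)$ coefficientwise by the unique analytic solution of the resulting implicit equation produced by the analytic implicit function theorem.

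The one step requiring some attention is the dimension count in (ii): the symmetry algebra of $iz\dop{z}$ is parametrized by two independent formal series in $w$, and one must check that this exactly matches the two-series ambiguity appearing in the prenormalization of \autoref{exceptcase}, so that flows of the commuting vector fields account for all the freedom in normalizing transformations of $X_N$. Once $H_0$ is pinned down by killing the resonances in its inverse, the rest of the argument is a literal repetition of the generic-case proof.
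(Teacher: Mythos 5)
Your proposal is correct and follows essentially the same route as the paper, which disposes of this case by declaring it ``treated identically'' to the exceptional case $A=0$, $B\neq 0$, i.e.\ by the centralizer-plus-Cauchy-majorant scheme of \autoref{converge}; your explicit computation of the centralizer of $iz\dop{z}$, the identification of the resonances $\alpha=1$ (for $F$) and $\alpha=0$ (for $G$), and the observation that the nonresonant divisors $|\alpha-1|$, $|\alpha|$ are bounded below by $1$ are exactly what that scheme requires here. One small slip to fix: since you write and solve the homological equations for $H_0$ (the map sending $X_N$ to $X$), it is $H_0$ itself, not $H_0^{-1}$, whose resonant monomials must be arranged to vanish, consistent with the convention in the proof of \autoref{converge}.
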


In the case of normal form \eqref{nfspec3}, in contrast, we face the {\em divergence phenomenon}. To show this, we first need
\begin{lemma}\label{centralizer}
The space of formal vector fields $Y$ at the origin commuting with a vector field $X$ in normal form \eqref{nfspec3} with $r\neq 0$ (the {\em formal centralizer} of $X$) is spanned by $X$ and the rotation field \eqref{except}.

At the same time, for $r=0$ (and hence $t=0$) the formal centralizer is infinite-dimensional.
\end{lemma}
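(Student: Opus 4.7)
The plan is to let $Y = f(z,w)\dop{z} + g(z,w)\dop{w}$ be a general formal vector field vanishing at the origin and expand out the commutation condition $[X,Y] = 0$. Writing $X = A(w)z\dop{z} + Q(w)\dop{w}$ with $A(w) = iw^k\gamma(w)$, $\gamma(w) = 1 + c_1 w + \ldots + c_q w^q$, and $Q(w) = rw^{k+1+q} + tw^{2q+2k+1}$, this condition decouples partially into a $\dop{w}$-equation
\[
A(w)\,z g_z + Q(w)\, g_w = Q'(w)\, g
\]
involving only $g$, and a $\dop{z}$-equation for $f$ in which $g$ appears linearly as a forcing term. I would solve the $g$-equation first (exploiting that $z$ enters it only through $zg_z$) and then handle the $f$-equation.

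For the case $r \neq 0$, expand $g = \sum_{\alpha \geq 0} g_\alpha(w)\, z^\alpha$ and match powers of $z$ to obtain a sequence of linear ODEs for the coefficients $g_\alpha(w)$. The $\alpha = 0$ ODE $Q g_0' = Q' g_0$ has the one-parameter family of solutions $g_0 = c Q(w)$, while for each $\alpha \geq 1$ the ODE acquires an extra term $i\alpha A(w)\, g_\alpha$. The main obstacle is to show that for every $\alpha \geq 1$ the only formal power series solution is $g_\alpha \equiv 0$; this should follow from a leading-order comparison in $w$, using that $\mathrm{ord}_{w=0} A = k$ while $\mathrm{ord}_{w=0} Q = k+q+1$ with $q \geq 1$, so if $g_\alpha = h_m w^m + O(w^{m+1})$ with $h_m\neq 0$ then the contribution $i\alpha A g_\alpha \sim i\alpha h_m w^{k+m}$ is of strictly smaller order than $Q g_\alpha' - Q' g_\alpha = O(w^{k+q+m})$, forcing $h_m = 0$ inductively. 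Thus $g = cQ(w)$. Substituting back into the $f$-equation and running the analogous coefficient-by-coefficient analysis on $f = \sum_\alpha f_\alpha(w) z^\alpha$ should force $f_\alpha \equiv 0$ for $\alpha \neq 1$, while $f_1$ satisfies the ODE $Q f_1' = ic Q (w^k\gamma)'$ with solution $f_1 = ic\, w^k\gamma + C$ for a free constant $C$. Hence $Y = cX + C z\dop{z}$, exhibiting the centralizer as the complex $2$-plane spanned by $X$ and $iz\dop{z}$; a direct verification that both $X$ and $iz\dop{z}$ do commute with $X$ closes the loop.

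For the case $r = 0$ (and therefore $t = 0$) the argument is much softer. Now $Q \equiv 0$ and $X = izw^k\gamma(w)\dop{z}$, so the $\dop{w}$-equation collapses to $izw^k\gamma g_z = 0$, forcing $g$ to be an arbitrary formal power series in $w$ alone. The $\dop{z}$-equation then reads $w^k\gamma(zf_z - f) = z g(w)(w^k\gamma)'$. Expanding $f = \sum_\alpha f_\alpha(w) z^\alpha$ and matching powers of $z$ gives $f_0 \equiv 0$ from the $z^0$ coefficient, the constraint $g(w)(w^k\gamma)' = 0$ from the $z^1$ coefficient (forcing $g \equiv 0$, since $(w^k\gamma)'$ is a nonzero formal power series whenever $X \not\equiv 0$), and $f_\alpha \equiv 0$ for $\alpha \geq 2$ since then $\alpha - 1 \neq 0$. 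The coefficient $f_1(w)$ remains completely unconstrained, so the centralizer is the infinite-dimensional space $\{f_1(w)\,z\dop{z} : f_1 \in \C[[w]]\}$, as claimed.
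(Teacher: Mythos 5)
Your argument is correct and follows essentially the same route as the paper: expand $Y$ in powers of $z$, kill the coefficients $g_\alpha$ ($\alpha\geq 1$) and $f_\alpha$ ($\alpha\neq 1$) by a leading-order comparison in $w$ (the paper phrases the same estimate via the order of the pole of $g_\alpha'/g_\alpha$), then feed $g=cQ$ into the $f$-equation to get $Y=cX+Cz\partial_z$. The only (harmless) overstatement is in the $r=0$ case: when $k=0$ and $\gamma\equiv 1$, i.e.\ $X=iz\partial_z$, the series $(w^k\gamma)'$ vanishes identically, so $g$ is not forced to vanish and the centralizer is strictly larger than $\{f_1(w)\,z\partial_z\}$ --- but that family already yields the infinite-dimensionality the lemma asserts.
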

\begin{proof}
Assume first $r\neq 0$ in \eqref{nfspec3}. Take a formal vector field $Y=f\dz+g\dw$ and consider the identity $[X.Y]=0$. Calculating the coefficients for $\dz$ and $\dw$ respectively, we obtain:
\beq\label{comm}
 i z w^{k}(1+\tilde c_1w+...+\tilde c_{q}w^{q})f_z+(r w^{k+1+q}+tw^{2q+2k+1}) f_w-ifw^{k}(1+\tilde c_1w+...+\tilde c_{q}w^{q})-izg(w^k+\tilde c_1w^{k+1}+...+\tilde c_{q}w^{k+q})'=0,
\eeq
$$ i z w^{k}(1+\tilde c_1w+...+\tilde c_{q}w^{q})g_z+(r w^{k+1+q}+tw^{2q+2k+1}) g_w-(r w^{k+1+q}+tw^{2q+2k+1})'g=0. $$
We first deal with the second equation in \eqref{comm}. Expanding 
$$g=\sum_{j\geq 0} g_j(w)z^j,$$
plugging into the second equation in \eqref{comm} and gathering terms with $z^j,\,j\geq 0$, we obtain:
$$\bigl(ijw^k(1+\tilde c_1w+...+\tilde c_{q}w^{q})-(r w^{k+1+q}+tw^{2q+2k+1})'\bigr)g_j+(r w^{k+1+q}+tw^{2q+2k+1})g_j'=0.$$
Note that $g_j'/g_j$ must have a pole of order $1$ at $0$, unless $g_j\equiv 0$. Combining this observation with the latter ODE for $g_j$, we see that $g_j\equiv 0$ for $j\neq 0$. For $j=0$, we get  
$$g_0=C(r w^{k+1+q}+tw^{2q+2k+1}),\quad C\in\CC{}.$$
Plugging the resulting $g=g_0$ into the first equation in \eqref{comm}, expanding
$$f=\sum_{j\geq 0} f_j(w)z^j,$$
and gathering terms with $z^j$, we similarly conclude that $f_j\equiv 0$ for $j\neq 1$, while for $f_1$ we have 
$$f_1=Cw^{k}(1+\tilde c_1w+...+\tilde c_{q}w^{q})+D,\quad C,D\in\CC{},$$
as required.

In the case $r=0$, we observe that the centralizer contains all formal vector fields of the kind $f(w)z\dz$, which shows the infinite-dimensionality and 
proves the lemma.

\end{proof}

We are now in the position to prove
\begin{theorem}\label{diverg}
For all the vector fields
\beq\label{e:diverg}
X_k:=(w^{k+1}+izw^k)\dz+w^{k+2}\dw,\quad k>0,
\eeq
any transformation bringing $X_k$ to the normal form \eqref{nfspec3} is divergent.
\end{theorem}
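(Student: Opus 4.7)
The plan is to argue by contradiction, comparing analytic separatrices (invariant analytic curves through $0$) of $X_k$ with those of any normal form $X_N$ of type \eqref{nfspec3}. Any such $X_N = izw^k(1+c_1 w +\dots + c_q w^q)\dop{z} + (rw^{k+1+q}+tw^{2q+2k+1})\dop{w}$ manifestly admits two analytic invariant curves through $0$: the axis $\{w=0\}$, on which $X_N$ vanishes identically, and the axis $\{z=0\}$, on which $X_N$ reduces to $(rw^{k+1+q}+tw^{2q+2k+1})\dop{w}$ and is hence tangent. The goal is to show that $X_k$ admits only $\{w=0\}$ as an analytic invariant curve, producing a discrepancy that rules out analytic equivalence.

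Factor $X_k = w^k Y$ with $Y = (iz+w)\dop{z}+w^2\dop{w}$; the analytic invariant curves of $X_k$ and of $Y$ through $0$ coincide. The linearization
\[
DY|_0 = \begin{pmatrix} i & 1 \\ 0 & 0 \end{pmatrix}
\]
has simple eigenvalues $i$ and $0$ with eigendirections $(1,0)$ and $(1,-i)$. The first produces the known analytic separatrix $\{w=0\}$; by the classical local theory for saddle-node-type planar singularities (Camacho--Sad and Briot--Bouquet), any other analytic separatrix of $Y$ is smooth, tangent to the remaining eigendirection $(1,-i)$, and hence of the form $\{z = h(w)\}$ with $h(0)=0$ and $h'(0)=i$. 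Imposing $X_k(z - h(w))|_{z=h(w)} \equiv 0$ and dividing by the common factor $w^k$ reduces to the Euler-type linear ODE
\[
w^{2}\, h'(w) - i\,h(w) = w.
\]
Writing $h(w) = \sum_{n\geq 1} b_n w^n$ yields $b_1 = i$ together with the recurrence $b_n = -i(n-1)\,b_{n-1}$ for $n \geq 2$, so $|b_n| = (n-1)!$. The unique formal solution is therefore divergent, and $X_k$ admits no second analytic invariant curve through $0$.

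Suppose for contradiction that a convergent biholomorphism $\Phi:(\mathbb{C}^2,0)\to(\mathbb{C}^2,0)$ satisfies $\Phi_*X_k = X_N$. Since $\Phi$ carries analytic sets to analytic sets and preserves the invariance property, it induces a bijective correspondence between analytic invariant curves of $X_k$ through $0$ and those of $X_N$ through $0$. But by the analysis above these sets have cardinalities $1$ and (at least) $2$ respectively -- a contradiction. Consequently, every transformation bringing $X_k$ to a normal form of type \eqref{nfspec3} must be divergent.

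The main technical hurdle is justifying that $X_k$ has no analytic invariant curve at $0$ besides $\{w=0\}$ and the candidate $\{z = h(w)\}$ -- in particular, ruling out singular or higher-multiplicity invariant curves. This rests on the standard local classification of non-dicritical two-dimensional singularities with nondegenerate linearization, which forces every analytic separatrix to be smooth and tangent to an eigendirection; since both eigendirections of $DY|_0$ have been analyzed, the explicit divergence of the Euler ODE completes the argument.
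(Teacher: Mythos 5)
Your proof is correct, and it takes a genuinely different route from the paper's. The paper argues through the \emph{formal centralizer}: it first shows that the centralizer of $X_k$ is finite-dimensional (so that, by \autoref{centralizer}, the normal form must have $r\neq 0$ and its centralizer is spanned by $X$ and the analytic rotation $iz\partial_z$), and then exhibits a divergent element of the centralizer of $X_k$ --- a field $(cz+f_0(w))\partial_z$ with $w^2f_0'-if_0=-cw$ --- which could not exist under a convergent normalization. You instead count \emph{analytic separatrices}: the normal form admits at least the two coordinate axes, while $X_k=w^kY$ with $Y=(iz+w)\partial_z+w^2\partial_w$ a saddle-node whose strong separatrix is $\{w=0\}$ and whose unique formal central manifold $z=h(w)$, determined by $w^2h'-ih=w$, has $|b_n|=(n-1)!$ and hence diverges; so $X_k$ has one analytic separatrix against the normal form's two. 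The two obstructions are the same divergent Euler series in disguise: the paper's divergent symmetry with $c=i$ is $i\left(z-h(w)\right)\partial_z$, which vanishes exactly on your divergent separatrix. Your approach is more geometric and bypasses \autoref{centralizer} entirely, at the cost of importing the classification of separatrices of a saddle-node (exactly two formal separatrices, both smooth and tangent to the eigendirections), which you correctly flag as the one external input; note that the linearization of $Y$ is in fact degenerate (eigenvalues $i$ and $0$), so the relevant fact is that a saddle-node is a reduced simple singularity, not nondegeneracy of the linear part. The paper's centralizer computation also yields one extra piece of information that your argument does not need but the paper wants: that the normal form of $X_k$ indeed has $r\neq 0$, i.e., that $X_k$ is genuinely of the type \eqref{nfspec3} rather than of the degenerate type $r=0$, so that the divergence statement is not vacuous.
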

\begin{proof}

We start by investigating the formal centralizer of $X_k$. For doing so, we study the condition $[X_k,Y]=0$ for a formal vector field $Y=f\dz+g\dw$ and collect respectively the $\dz,\dw$ components. This gives:
\beq\label{divsyst}
(w^{k+1}+izw^k)f_z+w^{k+2}f_w-iw^kf-((k+1)w^k+ikzw^{k-1})g=0,
\eeq
$$(w^{k+1}+izw^k)g_z+w^{k+2}g_w-(k+2)w^{k+1}g=0.$$
We first show that the space of formal solutions of \eqref{divsyst} is finite-dimensional. For doing so, we start with the second equation. Expanding 
$$g=\sum_{j\geq 0}g_j(w)z^j,\quad f=\sum_{j\geq 0}f_j(w)z^j$$
and gathering terms with $z^j$, we get:
$$ijw^kg_{j}+(j+1)w^{k+1}g_{j+1}+w^{k+2}g_j'-(k+2)w^{k+1}g_j=0.$$
For $j\neq 0$, we first conclude from the latter identity that 
$$\mbox{ord}_0\,g_{j+1}=\mbox{ord}_0\,g_{j}-1,$$ 
and this proves that {\em all $g_j$ vanish for $j> N$ for some $N$.} The remaining coefficient functions $g_N,\dots,g_0$ successively satisfy  ODEs of the kind
$$w^2g_j' +(ij-(k+2)w)g_j=H_j(w),$$
where $H_j(w)$ is a formal power series (obtained by substituting the previously determined $g_l,\,l>j$). This shows that the space of formal solutions $\{g\}$ for the second PDE in \eqref{divsyst} is finite-dimensional. 

Taking the property $g_j=0$ for $j>N$ into account and using a similar argument, we conclude a similar property of  $f_j$ (with a possibly different $N$) and an analogous finite-dimensionality property. This proves that the formal centralizer of $X_k$ is finite-dimensional. 

We arrive to the conclusion, in view of \autoref{centralizer}, that {\em the normal form \eqref{nfspec3} has $r\neq 0$}.

We proceed with the proof of divergence now. In view of \autoref{centralizer}, $X_k$ admits a (formal) vector field $Y$ commuting with $X_k$ with 
\beq\label{theY}
Y=f\dz+g\dw=(iz+O(w))\dz+O(w^2)\dw 
\eeq
(it is obtained by pulling back the rotation field \eqref{except} by the normalizing transformation).  Then $f,g$ satisfy, again, the system \eqref{divsyst}.
Let's search for solutions of \eqref{divsyst} of the kind:
$$g=0, \quad f=f_0(w)+cz,\quad c\in\CC{}.$$
For such a couple $(f,g)$ being a solution amounts to the single ODE:
$$
w^2f_0'-if_0=-cw.
$$
Expanding 
$$f_0=\sum_{l\geq 1} a_lw^l$$
and collecting terms of fixed degree, we first obtain $a_1=-ic$, and then $a_{l+1}=-i(l+1)a_l,\,l\geq 1$. This proves that $f_0$ is a {\em divergent} formal power series for $c\neq 0$. The latter means that there exists no convergent transformation of $X_k$ to the normal form \eqref{nfspec3} (since in the normal form the entire centralizer is analytic, as follows from \autoref{centralizer}). This completely proves the theorem. 

\end{proof}

\end{document}